\newcommand{\imv}{\chi_{\mu_i}}
\newcommand{\mvc}{\chi_{\mu}}
\newtheorem{theorem}{Theorem}
\newtheorem{conjecture}{Conjecture}
\newtheorem{corollary}[theorem]{Corollary}
\newtheorem{lemma}[theorem]{Lemma}
\newtheorem{observation}[theorem]{Observation}
\newtheorem{problem}{Problem}
\newtheorem{proposition}[theorem]{Proposition}
\newtheorem{remark}[theorem]{Remark}
\title{{Independent mutual-visibility coloring and related concepts}}
\date{}
\author{{Bo\v{s}tjan Bre\v{s}ar$^{1,2}$, Iztok Peterin$^{2,3}$, Babak Samadi$^{2}$ and Ismael G. Yero$^{4}$}\vspace{2mm}\\
$^{1}$Faculty of Natural Sciences and Mathematics, University of Maribor, Slovenia\\
$^{2}$Institute of Mathematics, Physics and Mechanics, Ljubljana, Slovenia\\
$^{3}$Faculty of Electrical Engineering and Computer Science, University of Maribor,\\ Maribor, Slovenia\\
$^{4}$Departamento de Matem\'{a}ticas, Universidad de C\'{a}diz, Algeciras Campus, Spain\vspace{1mm}\\
\texttt{bostjan.bresar@um.si}\\
\texttt{iztok.peterin@um.si}\\
\texttt{babak.samadi@imfm.si}\\
\texttt{ismael.gonzalez@uca.es}}
\date{}
\begin{document}

\maketitle

\begin{abstract}
Given a graph $G$, a subset $M\subseteq V(G)$ is a mutual-visibility (MV) set if for every $u,v\in M$, there exists a $u,v$-geodesic whose internal vertices are not in $M$. We investigate proper vertex colorings of graphs whose color classes are mutual-visibility sets. The main concepts that arise in this investigation are independent mutual-visibility (IMV) sets and vertex partitions into these sets (IMV colorings). The IMV number $\mu_{i}$ and the IMV chromatic number $\chi_{\mu_{i}}$ are defined as maximum and minimum cardinality taken over all IMV sets and IMV colorings, respectively. Along the way, we also continue with the study of MV chromatic number $\chi_{\mu}$ (as the smallest number of sets in a vertex partition into MV sets), which was initiated in an earlier paper. 

We establish a close connection between the (I)MV chromatic numbers of subdivisions of complete graphs and Ramsey numbers $R(4^k;2)$. From the computational point of view, we prove that the problems of computing $\chi_{\mu_{i}}$ and $\mu_{i}$ are NP-complete, and that it is NP-hard to decide whether a graph $G$ satisfies $\imv(G)=\alpha(G)$ where $\alpha(G)$ is the independence number of $G$. Several tight bounds on $\chi_{\mu_{i}}$, $\chi_{\mu}$ and $\mu_{i}$ are given. Exact values/formulas for these parameters in some classical families of graphs are proved. In particular, we prove that $\chi_{\mu_{i}}(T)=\chi_{\mu}(T)$ holds for any tree $T$ of order at least $3$, and determine their exact formulas in the case of lexicographic product graphs. Finally, we give tight bounds on the (I)MV chromatic numbers for the Cartesian and strong product graphs, which lead to exact values in some important families of product graphs.
\end{abstract}\vspace{0.5mm}
\textbf{Math.\ Subj.\ Class. 2020:} 05C12, 05C15, 05C55, 05C69, 68Q17\vspace{0.5mm}\\
\textbf{Keywords}: independent mutual visibility, mutual-visibility coloring, independence number, Ramsey number, graph product, diameter 2 graph, geodesic.    


\section{Introduction and preliminaries} 

The notion of visibility between two vertices in a graph was introduced by Di~Stefano in~\cite{DiS} in the following way. Given a graph $G$ and a set $X\subseteq V(G)$, two vertices $x,y\in X$ are {\em visible} with respect to $X$ (or $X$-{\em visible} as renamed in \cite{CDDH})  if there exists an $x,y$-geodesic (that is, a shortest path between $x$ and $y$) such that all its internal vertices are not in $X$. In this sense, a set $X$ is a {\em mutual-visibility set} (MV set) in $G$ if every two vertices in $X$ are $X$-visible. The maximum cardinality among all MV sets in $G$ is the {\em mutual-visibility number} of $G$, denoted $\mu(G)$. 

Several motivations for studying mutual-visibility in graphs were given in the seminal paper~\cite{DiS}, which are mainly arising in some computer science related models. In particular, the concept of mutual-visibility has been intensively studied in the context of sets of points in the Euclidean plane modeling the problem of placing mobile robots in such a way that they are seen among each other through some geodesic. A number of papers continued the investigation from~\cite{DiS} considering various aspects of mutual-visibility~\cite{BY, CDDH,CDK,CDKY, CDKY2,KV, Kuziak, TK}. It is a remarkable fact that the mutual-visibility number of some graphs is closely related to several classical combinatorial topics like the Zarankiewicz problem or some Tur\'an-type problems; see for instance \cite{BKT,CDK,CDKY2}. Moreover, several variations of the original concept have rapidly appeared. In particular, a variety of them is discussed in \cite{CDDH}.

The independent version of the mutual-visibility notion is known from \cite{CDK}, where the concept naturally appeared while the authors were studying the mutual-visibility number in Cartesian products of graphs. One can simply say that this new notion appeared by imposing the independence property on MV sets. That is, a set $X$ is an {\em independent mutual-visibility set} (IMV set for short) if $X$ is independent and every two vertices in $X$ are $X$-visible.
The cardinality of a largest IMV set in $G$ is the \textit{independent mutual-visibility number} (\textit{IMV number} for short), denoted by $\mu_i(G)$.

On the other hand, in the recent manuscript~\cite{KKVY}, the authors considered a vertex coloring of a graph with respect to the mutual-visibility property. Setting $[k]=\{1,\ldots,k\}$, a function $c:V(G)\rightarrow [k]$ is a {\em mutual-visibility $k$-coloring} (MV $k$-coloring) of a graph $G$ if every color class $c^{-1}(i)$, with $i\in [k]$, is an MV set in $G$. The smallest integer $k$ such that $G$ admits an MV $k$-coloring is the {\em mutual-visibility chromatic number} (MV chromatic number) of $G$, denoted $\mvc(G)$. We remark that some further contributions on this new parameter for the case of hypercubes were immediately after \cite{KKVY} presented in \cite{Axenovich}.

In connection with the two notions above, a natural continuation of these mutual-visibility investigations is to consider independent mutual-visibility colorings of graphs. Formally, a function $c:V(G)\rightarrow [k]$ is an {\em independent mutual-visibility $k$-coloring} (IMV $k$-coloring) of a graph $G$ if every color class $c^{-1}(i)$, with $i\in [k]$, is an IMV set in $G$. The smallest integer $k$ such that $G$ admits an IMV $k$-coloring is the {\em independent mutual-visibility chromatic number} (IMV chromatic number) of $G$, denoted $\imv(G)$. Consider for instance the graph $G^*$ of Figure \ref{fig:K_4-subdiv}. One can verify that each set formed by the vertices with the same color and shape forms an IMV set, and that the resulting coloring is an optimal one. Thus, $\imv(G^*)=3$.

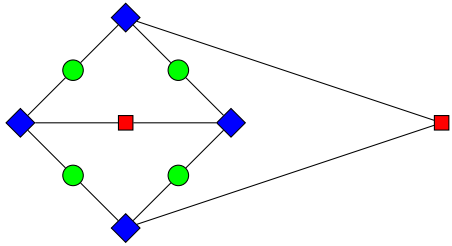
\begin{figure}[ht]
\centering
\begin{tikzpicture}[scale=.7, transform shape]
\node [draw, shape=rectangle, fill=red] (a1) at  (0,0) {};
\node [draw, shape=rectangle, fill=red] (a2) at  (6,0) {};

\node [draw, shape=diamond, fill=blue] (b1) at  (0,-2) {};
\node [draw, shape=diamond, fill=blue] (b2) at  (0,2) {};
\node [draw, shape=diamond, fill=blue] (b3) at  (-2,0) {};
\node [draw, shape=diamond, fill=blue] (b4) at  (2,0) {};

\node [draw, shape=circle, fill=green] (c1) at  (-1,-1) {};
\node [draw, shape=circle, fill=green] (c2) at  (-1,1) {};
\node [draw, shape=circle, fill=green] (c3) at  (1,-1) {};
\node [draw, shape=circle, fill=green] (c4) at  (1,1) {};

\draw(b3)--(a1)--(b4);
\draw(b1)--(c3)--(b4)--(c4)--(b2)--(c2)--(b3)--(c1)--(b1)--(a2)--(b2);

\end{tikzpicture}
\caption{The graph $G^*$, which is the complete graph $K_4$ with subdivided edges. An optimal IMV $3$-coloring is drawn.}
\label{fig:K_4-subdiv}
\end{figure}

We also remark that considering the independent version of mutual-visibility in the frame of vertex colorings of graphs might be more natural than its predecessor from~\cite{KKVY}, since an independent mutual-visibility coloring is also a proper coloring of the graph. Hence, this parameter belongs to the extensive list of graph invariants, which arise from the chromatic number by imposing to it some additional constraint(s).

The main goal of this paper is to consider the (I)MV chromatic number of graphs. We continue the investigation on $\mvc(G)$, as well as, initiate the study of $\imv(G)$. Along the way, we also present some results regarding the IMV number, giving more insight into the dynamic area of mutual-visibility investigations.


\subsection{Plan of the article}

We end this section with some extra terminology and notations that we need, together with a few basic observations. Section \ref{sec:subdivision} initiates the exposition of our results, where we first consider the subdivision graphs of complete graphs and complete bipartite graphs. This study already shows the difficulty of the (I)MV coloring problem. Specifically, we show that finding the (I)MV chromatic number of the subdivision graph of complete (bipartite) graphs relates to classical problems in Ramsey theory (notice that the example of Figure \ref{fig:K_4-subdiv} already represents a subdivision of the complete graph $K_4$, for which the (I)MV chromatic number equals $3$). 

In Section \ref{sec:complex}, we show that the decision problems associated with $\mu_{i}$ and $\chi_{\mu_{i}}$ are NP-complete. In particular, we prove that deciding whether equality happens in the inequality $\mu_{i}(G)\leq \alpha(G)$ (arisen from the definitions), in which $\alpha$ stands for the independence number, is NP-hard. 

Further on, tight bounds on the (I)MV chromatic numbers are given in Section~\ref{BE}. In particular, we prove that these two parameters are equal for trees on at least three vertices, and give the exact values/formulas for $\chi_{\mu}$, $\chi_{\mu_{i}}$ and $\mu_{i}$ in the case of lexicographic product graphs. We also consider graphs with diameter $2$ that were already studied in~\cite{KKVY}, and improve the bound on $\mvc(G)$ for graphs $G$ with ${\rm diam}(G)=2$ using the $1$-defective chromatic number of $G$. 

Finally, the (I)MV chromatic numbers are investigated in Section \ref{Cart-Str} in connection with the Cartesian and strong products, where we give general bounds, as well as, exact formulas when some factor is a special graph. In particular, we obtain the exact values of $\imv(P_t\boxtimes P_{4k})$ for all positive integers $t$ and $k$, and we determine the MV chromatic numbers for all strong products of paths. 


\subsection{Basic observations, terminology and notation}

The order $|V(G)|$ of a graph $G$ will be denoted by $n(G)$. For a vertex $v\in V(G)$, the \textit{open neighborhood} of $v$, denoted $N_G(v)$, is the set of vertices adjacent to $v$. If $S\subseteq V(G)$, then the \textit{open neighborhood} of $S$ is $N_G(S)=\left(\bigcup_{v\in S} N_G(v)\right)\setminus S$. Moreover, the subgraph induced by the set $V(G)\setminus S$ will be represented as $G-S$. 

A \textit{proper $k$-coloring} of a graph $G$ is a function $c:V(G)\rightarrow[k]$ such that for any two adjacent vertices $x,y\in V(G)$, $c(x)\ne c(y)$ holds. The \textit{chromatic number} $\chi(G)$ of $G$ is the smallest $k$ such that $G$ admits a proper $k$-coloring. 
Clearly, any IMV $k$-coloring of $G$ is a proper coloring of $G$ as well. On the other hand, it is clear that any IMV $k$-coloring is also an MV $k$-coloring. In addition, since each color class of any IMV $k$-coloring is an IMV set, we deduce that $\imv(G)\ge \left\lceil n(G)/\mu_i(G)\right\rceil$. Altogether, the following observation follows for any graph $G$:
\begin{equation}
\label{eq:first-ineq}
\imv(G)\ge \max\left\{\chi(G),\mvc(G),\left\lceil\frac{n}{\mu_i(G)}\right\rceil\right\}.
\end{equation}

A subgraph $H$ of a graph $G$ is called \textit{convex} if all geodesics in $G$ between vertices of $H$ entirely lie in $H$. The first claim of the following lemma was implicitly used in~\cite{KKVY}, while the second claim is similarly easy. We provide the proof of this auxiliary result for completeness.

\begin{lemma}\label{lem:convex}
If $H$ is a convex subgraph in $G$, then $\chi_{\mu}(G)\ge \chi_{\mu}(H)$ and $\imv(G)\ge \imv(H)$.
\end{lemma}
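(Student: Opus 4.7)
The plan is to start from an optimal (I)MV coloring of the host graph $G$ and simply restrict it to $V(H)$; convexity will guarantee that all visibility witnesses used for pairs of vertices of $H$ are themselves available inside $H$.

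More concretely, first I would fix an MV $k$-coloring $c\colon V(G)\to[k]$ with $k=\chi_{\mu}(G)$ and define $c'\colon V(H)\to[k]$ by $c'(v)=c(v)$ for every $v\in V(H)$ (discarding any colors that become empty on $V(H)$). I then need to verify that each color class $c'^{-1}(i)=c^{-1}(i)\cap V(H)$ is an MV set in $H$. Pick two vertices $u,v\in c'^{-1}(i)$. Because $c$ is an MV coloring of $G$, there exists a $u,v$-geodesic $P$ in $G$ all of whose internal vertices lie outside $c^{-1}(i)$. Since $H$ is convex in $G$, the path $P$ lies entirely in $H$; in particular, $d_H(u,v)=d_G(u,v)$, so $P$ is also a $u,v$-geodesic in $H$, and its internal vertices are in $V(H)\setminus c'^{-1}(i)$. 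Hence $c'^{-1}(i)$ is $H$-visible, which yields $\chi_\mu(H)\le k=\chi_\mu(G)$.

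For the second inequality I would repeat the same restriction argument starting from an optimal IMV coloring $c$ of $G$. The visibility condition is handled exactly as above. The only extra point is independence: each color class $c^{-1}(i)$ is independent in $G$, and since $H$ is a (necessarily induced) subgraph, any subset of an independent set of $G$ contained in $V(H)$ is still independent in $H$. Thus every color class of $c'$ is an IMV set in $H$, giving $\chi_{\mu_i}(H)\le \chi_{\mu_i}(G)$.

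The only real obstacle is the transfer of geodesics, and this is precisely what convexity delivers: without convexity the internal vertices of the witnessing $u,v$-geodesic in $G$ could leave $V(H)$, so a geodesic in $G$ would not be guaranteed to remain a geodesic (let alone a valid visibility witness) in $H$. Everything else is a direct bookkeeping argument on color classes.
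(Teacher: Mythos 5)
Your proposal is correct and follows essentially the same route as the paper: restrict an optimal (I)MV coloring of $G$ to $V(H)$ and use convexity to keep the witnessing geodesics (and hence their internal vertices) inside $H$, with the extra remark that independence passes to the induced subgraph. The paper's proof is the same restriction argument, phrased in terms of a $\chi_\mu(G)$-partition.
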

\begin{proof}
Let ${\cal S}=\{S_1,\ldots, S_{\chi_\mu(G)}\}$ be a $\chi_\mu(G)$-partition. Consider the restriction ${\cal S}_H$ of $\cal S$ to $V(H)$. Let $x$ and $y$ be any two vertices in $H$ that both belong to the same set from $\cal S$, say $x,y\in S_i$. Since $\cal S$ is a $\chi_\mu(G)$-partition of $G$, there exists an $x,y$-geodesic $P$ in $G$ such that no internal vertices on $P$ belong to $S_i$. Now, since $H$ is convex in $G$, all vertices of $P$ lie in $H$. In other words, $P$ is an $x,y$-geodesic in $H$ with no internal vertices in $S_{i}\cap V(H)$. Therefore, ${\cal S}_H$ is a vertex partition of $H$ into MV sets. Thus, $\chi_{\mu}(H)\leq|{\cal S}_H|\le|{\cal S}|={\chi_\mu}(G)$.

The second inequality follows from the argument above by replacing a ``$\chi_\mu(G)$-partition'' with a ``$\imv(G)$-partition''.
\end{proof}

If $G$ and $H$ are two graphs, then the vertex set of the four standard products of $G$ and $H$ is $V(G)\times V(H)$; see~\cite{HIK}; while their edge sets are defined as follows.
\begin{itemize}
\item In the \emph{Cartesian product} $G\square H$, two vertices are adjacent if they are adjacent in one coordinate and equal in the other.
\item In the \emph{direct product} $G\times H$, two vertices are adjacent if they are adjacent in both coordinates.
\item The edge set of the \emph{strong product} $G\boxtimes H$ is the union of $E(G\square H)$ and $E(G\times H)$.
\item Two vertices $(g,h)$ and $(g',h')$ are adjacent in the \emph{lexicographic product} $G\circ H$ if either $gg'\in E(G)$ or $g=g'$ and $hh'\in E(H)$.
\end{itemize}

The edges of the strong product that belong to $E(G\square H)$ are \emph{Cartesian edges} and the other are \emph{non-Cartesian edges}. The subgraph of any product induced on the set $^{g}\! H=\{(g,h):h\in V(H)\}$ for some $g\in V(G)$ is an $H$-\emph{fiber}, and is isomorphic to $H$ in the Cartesian, the strong and the lexicographic product. Similarly, for an arbitrary $h\in V(H)$ we consider the set $G^h$ obtaining a $G$-\emph{fiber}.   

Throughout the paper, by an $\eta(G)$-coloring and a $\theta(G)$-set, with $\eta\in \{\chi,\chi_{\mu},\chi_{\mu_{i}}\}$ and $\theta\in \{\alpha,\mu_{i}\}$, we mean a coloring, an MV coloring, an IMV coloring, an independent set and an IMV set of $G$ of cardinality $\chi(G)$, $\chi_{\mu}(G)$, $\chi_{\mu_{i}}$, $\alpha(G)$ and $\mu_{i}(G)$, respectively. We use \cite{we} as a reference for terminology and notation that are not explicitly defined here.


\section{Subdivision of complete graphs and Ramsey numbers}\label{sec:subdivision}

Given a graph $G$, the \textit{subdivision} of an edge $e=uv$ of $G$ is the operation made on $G$ so that we remove the edge $e=uv$, add a new vertex $e_{uv}$ and the two edges $ue_{uv}$ and $ve_{uv}$. Such a vertex $e_{uv}$ is called a \textit{subdivision vertex}. The \textit{subdivision graph} $S(G)$ of $G$ is obtained from $G$ by subdividing all its edges. We next consider the IMV chromatic number of the subdivision graphs of $K_n$ and $K_{r,s}$. To this end, we need the following terminology. Given a graph $H$, a graph $G$ is \textit{$H$-free} if no subgraph of $G$ is isomorphic to $H$. If $F$ is a subset of edges of $G$, then the \textit{edge-induced subgraph} $\langle F\rangle$ is the subgraph of $G$ formed by the edges of $F$ together with all their endvertices. By an \textit{$H$-free partition} of $G$, we mean a partition $P=\{P_1,\dots,P_r\}$ of $E(G)$ in such a way that each edge-induced subgraph $\langle P_i\rangle$ is an $H$-free graph. 

We next consider the case when $G$ is the complete graph $K_n$ and $H$ is the complete graph $K_4$. In this sense, let $\rho(n)$ be the smallest number of sets among all $K_4$-free partitions of $K_n$.

\begin{theorem}\label{th:subdiv-K_n}
For any positive integer $n$, $\rho(n)\le\mvc \big{(}S(K_n)\big{)}\le \imv\big{(}S(K_n)\big{)}\le \rho(n)+1$.
\end{theorem}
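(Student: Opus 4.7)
The plan is to establish the three inequalities separately; the middle one is immediate because every IMV coloring is, by definition, also an MV coloring of the same size. So the real work is in the outer inequalities, and both rely on the same structural observation: inside $S(K_n)$, two subdivision vertices $e_{uv}$ and $e_{xy}$ with $\{u,v\}\cap\{x,y\}=\emptyset$ lie at distance $4$, and every $e_{uv},e_{xy}$-geodesic has the form $e_{uv}-a-e_{ab}-b-e_{xy}$ with $a\in\{u,v\}$, $b\in\{x,y\}$ and $e_{ab}$ the unique subdivision vertex of the edge $ab$ of $K_n$.

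For the lower bound $\rho(n)\leq \chi_{\mu}(S(K_n))$, I would take any MV $k$-coloring $c$ of $S(K_n)$ and partition $E(K_n)$ into classes $P_1,\ldots,P_k$ by letting $uv\in P_i$ whenever $c(e_{uv})=i$. The claim is that this partition is $K_4$-free. Suppose some $P_i$ contained a $K_4$ on $\{u,v,x,y\}$; then all six subdivision vertices among these four originals have color $i$. Picking the pair $e_{uv}, e_{xy}\in c^{-1}(i)$ at distance $4$, each of the four candidate geodesics has $e_{ab}$ as an internal vertex with $ab$ one of the four cross edges $ux,uy,vx,vy$, and all four cross edges lie in $P_i$, so every geodesic is blocked. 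This contradicts the MV property of $c^{-1}(i)$, yielding $K_4$-freeness.

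For the upper bound $\chi_{\mu_i}(S(K_n))\leq \rho(n)+1$, I would start from a $K_4$-free partition $\{P_1,\ldots,P_r\}$ of $E(K_n)$ with $r=\rho(n)$, color each subdivision vertex $e_{uv}$ with the index $i$ such that $uv\in P_i$, and give all original vertices a brand-new color $r+1$. Each color class is independent, because no two original vertices are adjacent in $S(K_n)$ and no two subdivision vertices are either. For the MV verification, the class $r+1$ is fine since any two originals $u,v$ are joined by the length-$2$ geodesic through $e_{uv}$, whose only internal vertex carries a color in $[r]$. For a class $i\leq r$, two subdivision vertices sharing an endpoint $u$ are joined by the geodesic $e_{uv}-u-e_{uw}$ whose internal vertex has color $r+1$; and for a disjoint pair $e_{uv}, e_{xy}$ in $P_i$, the $K_4$-freeness of $\langle P_i\rangle$ guarantees that at least one of the cross edges $ux,uy,vx,vy$ is missing from $P_i$, producing a length-$4$ geodesic all of whose internal vertices lie outside $c^{-1}(i)$.

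The main conceptual point is recognizing that the MV condition for distance-$4$ pairs of same-color subdivision vertices is \emph{exactly} the condition that no monochromatic $K_4$ arises in the induced edge partition of $K_n$; once this equivalence is pinned down, both directions are essentially transcriptions of the same fact, so there is no real obstacle beyond being careful that distance-$2$ pairs of subdivision vertices (those sharing an endpoint) are also handled, which in the upper-bound construction is automatic because original vertices form their own separate color class.
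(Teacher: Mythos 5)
Your proposal is correct and follows essentially the same route as the paper: transfer a $K_4$-free edge partition of $K_n$ to the subdivision vertices and add the original vertices as one extra class for the upper bound, and read off a $K_4$-free edge partition from any MV coloring for the lower bound, using exactly the structure of distance-$2$ and distance-$4$ geodesics between subdivision vertices. The only cosmetic difference is that the paper explicitly discards possibly empty edge classes in the lower-bound direction (bounding $\rho(n)$ by the number of nonempty classes), a detail your argument glosses over but which does not affect correctness.
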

\begin{proof}
Assume $U=V(K_n)=[n]$. According to the definition of $S(K_n)$, it is readily observed that $U$ is an IMV set of $S(K_n)$ as any two vertices $i,j\in U$ are not adjacent in $S(K_n)$ and are $U$-visible through the geodesic $ie_{ij}j$. Now, consider a $K_4$-free partition $P=\{P_1,\dots,P_r\}$ of $K_n$, and let $P'=\{P'_1,\dots,P'_r\}$ be the collection of vertex subsets in $S(K_n)$ such that for each $k\in[r]$ it holds $e_{ij}\in P_{k}'$ if and only if $ij\in P_{k}$. 

 We claim that each $P'_k$ is an IMV set of $S(K_n)$. Let $e_{ij},e_{rs}\in P'_k$. If the distinct edges $ij,rs\in E(K_n)$ have a common endvertex, say $j=r$, then $e_{ij}je_{rs}$ is an $e_{ij},e_{rs}$-geodesic in $S(K_n)$. Thus, $e_{ij}$ and $e_{rs}$ are $P'_k$-visible. On the other hand, if $ij$ and $rs$ have no common endvertex in $K_n$, then at least one of the edges $ir$, $is$, $jr$ or $js$, say $ir$, does not belong to $P'_k$ since the subgraph $\langle P_k\rangle$ is $K_4$-free. Thus, $e_{ij}ie_{ir}re_{rs}$ is an $e_{ij},e_{rs}$-geodesic in $S(K_n)$, which does not contain any internal vertex from $P'_k$. So, $e_{ij}$ and $e_{rs}$ are again $P'_k$-visible. 

As a consequence of the arguments above, we deduce that $P'\cup \{U\}$ is an IMV coloring of $S(K_n)$. Therefore, $\imv \big{(}S(K_n)\big{)}\le \rho(n)+1$.

To prove that $\mvc \big{(}S(K_n)\big{)}\ge \rho(n)$, we consider an MV coloring $Q=\{Q_1,\dots,Q_t\}$ of $S(K_n)$ with $t=\mvc \big{(}S(K_n)\big{)}$. Now, let $Q'=\{Q'_1,\dots,Q'_t\}$ be the collection of edge subsets of $K_n$ such that an edge $ij\in Q'_k$ if and only if $e_{ij}\in Q_k$. Clearly, there might be some sets $Q'_\ell$ in $Q'$ that are empty and let $q$ be the number of nonempty subsets of $Q'$. Since each $Q_k$ is an MV set of $S(K_n)$, in order that its vertices will be $Q_k$-visible in $S(K_n)$, it must happen that either $Q'_k$ is empty or the edge-induced subgraph $\langle Q'_k\rangle$ is $K_4$-free. Hence, the collection of all nonempty subsets of $Q'$ forms a $K_4$-free partition of $K_n$ (notice that each edge of $K_n$ belongs to one set of $Q'$ due to the definition of $Q'$). Thus, $\rho(n)\le q\leq \mvc \big{(}S(K_n)\big{)}$. 

Finally, the inequality chain is completed by using \eqref{eq:first-ineq}.
\end{proof}

There is a close connection between the (I)MV chromatic numbers of subdivisions of complete graphs and some classical notions from Ramsey theory. Denote by $R(m^k)$ the Ramsey number $$R(\underbrace{m,\ldots,m}_\text{\scriptsize $k$}),$$ which stands for the smallest integer $N$ such that every $k$-coloring of $E(K_N)$ contains a subgraph isomorphic to $K_m$ all edges of which are colored with the same color from $[k]$.

Fixing $m$, note that $R(m^k)$ is an increasing function of $k$, that is, $R(m^k)<R(m^{k+1})$ holds for any positive integer $k$. For any positive integer $n$, the definition of the function $\rho$ yields $$R(4^{\rho(n)-1})\le n < R(4^{\rho(n)}),$$ which in turn implies the following remark. 

\begin{observation}
\label{obs:Ramsey}
For a positive integer $n$, $\rho(n)$ represents the smallest integer $k$ such that $R(4^k)>n$.
\end{observation}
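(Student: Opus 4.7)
The plan is to prove the observation by translating the extremal definition of $\rho(n)$ into a statement about edge-colorings of $K_n$, and then matching that statement to the defining property of $R(4^k)$. First I would unfold the definitions: the inequality $\rho(n)\le k$ means that there exists a partition $\{P_1,\dots,P_k\}$ of $E(K_n)$ (possibly allowing some $P_i=\emptyset$, which is trivially $K_4$-free) such that each edge-induced subgraph $\langle P_i\rangle$ contains no $K_4$. Viewing the parts as color classes, this is the same as the existence of a $k$-edge-coloring of $K_n$ with no monochromatic $K_4$.

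Next I would invoke the definition of the Ramsey number: $R(4^k)$ is the smallest $N$ such that every $k$-edge-coloring of $K_N$ yields a monochromatic $K_4$. Consequently, a $k$-edge-coloring of $K_n$ avoiding a monochromatic $K_4$ exists if and only if $n<R(4^k)$, equivalently $R(4^k)>n$. Combining the two equivalences gives
\[
\rho(n)\le k \iff R(4^k)>n,
\]
and taking the minimum over $k$ on both sides yields $\rho(n)=\min\{k : R(4^k)>n\}$, which is exactly the claim.

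There is essentially no obstacle here; the only mild subtlety is confirming that allowing empty parts in the definition of $\rho(n)$ does not affect the equivalence, but this is immediate since empty subgraphs are $K_4$-free and since the function $k\mapsto R(4^k)$ is strictly increasing, so the set $\{k : R(4^k)>n\}$ is upward-closed and has a well-defined minimum.
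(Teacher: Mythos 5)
Your argument is correct and follows essentially the same route as the paper, which also reads off the equivalence $\rho(n)\le k \iff n<R(4^k)$ from the definitions (stated there as $R(4^{\rho(n)-1})\le n<R(4^{\rho(n)})$, using that $R(4^k)$ increases in $k$). Your explicit handling of empty color classes is a harmless extra detail; nothing further is needed.
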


It is well known that $R(4,4)=18$, but for $k\ge 3$, determining $R(4^k)$ is a formidable problem. Therefore, we strongly suspect that determining $\imv(G)$ and $\mvc(G)$ is also a (computationally) difficult problem since its solution would lead to very close approximations of some Ramsey numbers. In particular, in the next section, we establish the NP-completeness of the decision version of the IMV chromatic number. 

Now consider the case when $G$ is the complete bipartite graph $K_{r,s}$ and $H$ is the cycle $C_4$ in the definition of $H$-free partitions. Hence, let $\rho(r,s)$ be the smallest number of sets among all $C_4$-free partitions of $K_{r,s}$. The following result can be deduced, by using similar arguments as the ones from Theorem \ref{th:subdiv-K_n}, only taken into account that we need to consider $C_4$-free partitions instead of $K_4$-free partitions.  

\begin{theorem}\label{th:subdiv-K_r-s}
For any positive integers $r,s$, $\rho(r,s)\le\mvc \big{(}S(K_{r,s})\big{)}\le \imv\big{(}S(K_{r,s})\big{)}\le \rho(r,s)+1$.
\end{theorem}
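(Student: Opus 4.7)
The plan is to adapt the proof of Theorem~\ref{th:subdiv-K_n} with $K_4$ replaced by $C_4$, exactly as the remark preceding the statement suggests. The middle inequality $\mvc(S(K_{r,s}))\le\imv(S(K_{r,s}))$ is immediate from~\eqref{eq:first-ineq}.

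For the lower bound, I would take a minimum MV coloring $Q=\{Q_1,\dots,Q_t\}$ of $S(K_{r,s})$ with $t=\mvc(S(K_{r,s}))$ and define the edge subsets $Q'_k=\{ij\in E(K_{r,s}):e_{ij}\in Q_k\}$, whose nonempty members partition $E(K_{r,s})$. The key claim is that every edge-induced subgraph $\langle Q'_k\rangle$ is $C_4$-free. If $\langle Q'_k\rangle$ contained a $4$-cycle on vertices $i,r\in U_1$ and $j,s\in U_2$ with edges $ij,is,rj,rs$, then $e_{ij}$ and $e_{rs}$ would both be in $Q_k$ but would admit in $S(K_{r,s})$ only the two length-$4$ geodesics $e_{ij}\,i\,e_{is}\,s\,e_{rs}$ and $e_{ij}\,j\,e_{rj}\,r\,e_{rs}$, each passing through an internal subdivision vertex ($e_{is}$ or $e_{rj}$) that also lies in $Q_k$, contradicting $Q_k$-visibility. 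Hence $\rho(r,s)\le t$.

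For the upper bound, I would fix a $C_4$-free partition $\{P_1,\dots,P_{\rho(r,s)}\}$ of $E(K_{r,s})$ and set $P'_k=\{e_{ij}:ij\in P_k\}$. Each $P'_k$ is an IMV set by a direct analog of the $K_4$-free argument: subdivision vertices are pairwise non-adjacent in $S(K_{r,s})$, giving independence, and for $e_{ij},e_{rs}\in P'_k$ visibility is witnessed either by a length-$2$ geodesic through a common endpoint or by a length-$4$ geodesic through whichever of $e_{is},e_{rj}$ is forced outside $P'_k$ by the $C_4$-freeness of $\langle P_k\rangle$. A single additional color class covering the original vertices $V(K_{r,s})$ then completes an IMV $(\rho(r,s)+1)$-coloring.

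The delicate step, which I anticipate to be the principal obstacle, is verifying that this extra color class containing the original vertices is IMV. Pairs of original vertices from different bipartition classes pose no issue, since their unique length-$2$ geodesic runs through a subdivision vertex outside the class; but two original vertices in the same bipartition class sit at distance $4$, and every geodesic between them passes through a vertex of the opposite class, which is itself an original vertex in the same color class. This is exactly the place where the $K_n$ argument does not transfer literally, and closing the gap will likely require a refinement such as distributing one of the parts $U_1,U_2$ across the $P'_k$ classes (each $v$ placed into a $P'_k$ in which $v$ is incident to no edge of $P_k$, so that both independence and visibility are preserved) and retaining only the other part as the extra IMV class. Once this is handled, the three-way inequality is established.
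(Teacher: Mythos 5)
Your lower bound and your treatment of the classes $P'_k$ are correct and follow exactly the route the paper intends (the paper gives no separate proof, only the remark that the $K_4$-free argument of Theorem~\ref{th:subdiv-K_n} transfers with $C_4$ in place of $K_4$), and the middle inequality is indeed just \eqref{eq:first-ineq}. But the point at which you stop is precisely where the literal transfer breaks, and your diagnosis of it is accurate: unlike in $S(K_n)$, the set $V(K_{r,s})$ of original vertices is not even an MV set of $S(K_{r,s})$ as soon as some part has two vertices, since two vertices of the same part are at distance $4$ and the middle vertex of every geodesic between them is an original vertex of the other part. Your proposed repair does not close this gap: inserting each vertex $v$ of one part into a class $P'_k$ such that $P_k$ contains no edge incident with $v$ requires such a class to exist for every $v$, and an arbitrary (even optimal) $C_4$-free partition need not provide one — a vertex may meet edges of every class (already in $K_{2,2}$ there are optimal $2$-class partitions in which some vertices meet both classes). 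You give no argument that the partition can be chosen or modified so that the required placement exists, so the upper bound $\imv\big(S(K_{r,s})\big)\le \rho(r,s)+1$ is not established by your proposal.

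Moreover, the obstacle you located is not a removable technicality, because the claimed upper bound is simply false in the degenerate cases covered by the statement: $S(K_{1,2})\cong P_5$ and $\mvc(P_5)=3$ (the paper itself quotes $\mvc(P_n)=\lceil n/2\rceil$), while $\rho(1,2)+1=2$; more generally, for every $s\ge 2$ the subdivided star $S(K_{1,s})$ contains $P_5$ as a convex subgraph, so by Lemma~\ref{lem:convex} we get $\mvc\big(S(K_{1,s})\big)\ge 3>2=\rho(1,s)+1$. Hence any complete proof must restrict to $r,s\ge 2$ (or otherwise exclude stars) and must genuinely handle the original vertices — for instance, keep one entire part as the extra class (that part alone is an IMV set, since geodesics between its vertices pass only through subdivision vertices and vertices of the other part) and prove that each vertex of the remaining part can be inserted into some class containing no edge incident with it, which is exactly the unverified existence statement above. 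So your write-up, like the paper's one-line justification, leaves the upper bound unproven; its merit is that it pinpoints the exact step where the argument of Theorem~\ref{th:subdiv-K_n} fails to carry over.
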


In a similar fashion, as for $\rho(n)$, we remark that if $r=s$, then indeed the value of $\rho(s,s)$ represents the smallest integer $k$ such that $br\big{(}(C_4)^k\big{)}>s$, where $br\big{(}(C_4)^k\big{)}$ is the bipartite Ramsey number $br(\underbrace{C_4,\ldots,C_4}_\text{\scriptsize $k$})$. Bipartite Ramsey numbers have been widely studied, and for the specific case of bipartite Ramsey numbers of cycles, the reader can find a few good contributions when $k=2$ or $k=3$ (see for instance \cite{Yan-24} and references therein). Determining these numbers for larger values of $k$ is widely open.


\section{Complexity issues}\label{sec:complex}

In this section, we first determine the computational complexity of computing the IMV number and the IMV chromatic number. More precisely, we analyze the following decision problems.  
$$\begin{tabular}{|l|}
\hline
\mbox{\sc Independent Mutual-Visibility}\\
\mbox{{\sc Instance}: A graph $G$ and a positive integer $r\leq n(G)$.}\\
\mbox{{\sc Question}: Does there exist an IMV set in $G$ of cardinality at least $r$?}\\
\hline
\end{tabular}$$ 
and
$$\begin{tabular}{|l|}
\hline
\mbox{\sc Independent Mutual-Visibility Coloring}\\
\mbox{{\sc Instance}: A graph $G$ and a positive integer $r\leq n(G)$.}\\
\mbox{{\sc Question}: Is there an IMV coloring in $G$ with at most $r$ colors?}\\
\hline
\end{tabular}$$

For this purpose, we use the structure of the corona product of graphs defined as follows. Let $G$ and $H$ be graphs where $V(G)=\{v_1,\ldots,v_{n}\}$. The \textit{corona product} $G\odot H$ of $G$ and $H$ is obtained from the disjoint union of $G$ and $n$ disjoint copies of $H$, say $H_1,\ldots,H_{n}$, such that $v_j\in V(G)$ is adjacent to all vertices of $H_j$ for each $j\in[n]$.

We make use of the following decision problem, which is well known to be NP-complete (see \cite{GJ}), to prove the next result.

$$\begin{tabular}{|l|}
\hline
\mbox{\sc Independent Set}\\
\mbox{{\sc Instance}: A graph $G$ and a positive integer $k\leq n(G)$.}\\
\mbox{{\sc Question}: Does $G$ have an independent set of cardinality at least $k$?}\\
\hline
\end{tabular}$$ 

\begin{theorem}\label{Complexity1}
{\sc Independent Mutual-Visibility} is NP-complete even for graphs with universal vertices.
\end{theorem}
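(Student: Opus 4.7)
I would first argue that the problem lies in \textsc{NP}: a purported IMV set $X$ of size at least $r$ can be verified in polynomial time by (i) checking independence of $X$ in $O(|X|^{2})$ steps, and (ii) for each pair $x,y\in X$, running a BFS in $G-(X\setminus\{x,y\})$ and comparing the resulting distance with $d_{G}(x,y)$, so as to confirm that some $x,y$-geodesic avoids the other vertices of $X$.

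For hardness I would reduce from the classical \textsc{Independent Set} problem. Given an instance $(G,k)$, construct $G':=K_{1}\odot G$, i.e., attach to $G$ a fresh vertex $w$ adjacent to every vertex of $G$; by construction $w$ is a universal vertex of $G'$, so the produced instance $(G',k)$ meets the structural restriction claimed by the theorem. The central claim of the reduction is that, for $k\geq 2$, the graph $G$ contains an independent set of cardinality at least $k$ if and only if $G'$ contains an IMV set of cardinality at least $k$; the case $k\leq 1$ is trivial on both sides.

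One direction is immediate: any independent set $S\subseteq V(G)$ with $|S|\geq 2$ remains independent in $G'$, and since $w$ is universal one has $\mathrm{diam}(G')\leq 2$, so for every two non-adjacent $x,y\in S$ the path $xwy$ is an $x,y$-geodesic of $G'$ whose only internal vertex $w$ lies outside $S$; hence $S$ is an IMV set of $G'$. For the converse, let $X$ be an IMV set of $G'$ with $|X|\geq 2$. Since $w$ is adjacent to every other vertex, the inclusion $w\in X$ would immediately contradict independence; hence $X\subseteq V(G)$, and since $G$ is the subgraph of $G'$ induced by $V(G)$, the independence of $X$ in $G'$ restricts to independence of $X$ in $G$. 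The only potential obstacle is the handling of the universal vertex in the backward implication, but the simple observation that $w$ is incompatible with any independent set of size at least $2$ resolves it at once, so the reduction is correct.
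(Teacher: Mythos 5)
Your proposal is correct and follows essentially the same route as the paper: both reduce from \textsc{Independent Set} by attaching a universal vertex (i.e., forming $K_1\odot G$) and using that, in a graph of diameter at most $2$, independent sets and IMV sets of size at least $2$ coincide. The paper merely packages this observation as the more general corona formula $\mu_i(G\odot H)=n(G)\alpha(H)$ before specializing to $K_1\odot H$, while you argue the equivalence directly; the verification of NP membership is likewise standard in both.
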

\begin{proof}
The problem belongs to NP since checking that a given subset $P$ of vertices is both an independent set and an MV set of cardinality at least $r$ can be done in polynomial time (regarding mutual visibility, this was specifically checked by an $O\big{(}|P|(n+m)\big{)}$-time algorithm in \cite{DiS}, in which $n$ and $m$ are the order and the size of the graph, respectively). 

To prove that the problem is NP-hard, we describe a polynomial transformation from the {\sc Independent Set} to our problem. Consider a graph $H$ and a positive integer $r\leq n(H)$ as an instance of the {\sc Independent Set}. We consider $G\odot H$, where $G$ is any graph. Let $J_{j}$ be an $\alpha(H_{j})$-set for each $j\in[n(G)]$. Note that $J=\bigcup_{j=1}^{n(G)}J_{j}$ is simultaneously an MV set in $G\odot H$ and an $\alpha(G\odot H)$-set. Therefore, $|J|\leq \mu_{i}(G\odot H)\leq \alpha(G\odot H)=|J|$. Hence,
\begin{equation}\label{Cor}
\mu_{i}(G\odot H)=n(G)\alpha(H)
\end{equation} 
for any graphs $G$ and $H$. In particular, we have $\mu_{i}(K_{1}\odot H)=\alpha(H)$. Our reduction is now completed by taking into account that $\mu_{i}(K_{1}\odot H)\geq r$ if and only if $\alpha(H)\geq r$. Since {\sc Independent Set} is NP-complete (and since $K_{1}\odot H$ has a universal vertex), {\sc Independent Mutual-Visibility} is NP-complete even for graphs with universal vertices.
\end{proof}

The following famous decision problem, which is known to be NP-complete (see \cite{GJ}), turns out to be useful in the proof of the next theorem.

$$\begin{tabular}{|l|}
\hline
\mbox{\sc Graph colorability}\\
\mbox{{\sc Instance}: A graph $G$ and a positive integer $k\leq n(G)$.}\\
\mbox{{\sc Question}: Does there exist a proper coloring of $G$ with at most $k$ colors?}\\
\hline
\end{tabular}$$

\begin{theorem}\label{Complexity2}
{\sc Independent Mutual-Visibility Coloring} is NP-complete even for graphs with universal vertices.
\end{theorem}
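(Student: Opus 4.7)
The plan is to mirror the structure of Theorem \ref{Complexity1}: establish membership in NP via polynomial-time verification, and then reduce from \textsc{Graph Colorability} using the corona construction $K_{1}\odot H$, which automatically yields a universal vertex as required by the statement. For the NP part, the certificate is the coloring $c:V(G)\to[r]$ itself; the verifier checks that each class $c^{-1}(i)$ is independent (trivial) and that it is an MV set (using the polynomial-time procedure from \cite{DiS} cited in the previous proof).

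For NP-hardness, given an instance $(H,k)$ of \textsc{Graph Colorability} I would build in polynomial time the instance $(K_{1}\odot H,\,k+1)$, where the single vertex of $K_{1}$, call it $w$, is adjoined as a universal vertex to $H$. The heart of the reduction is the identity
$$\chi_{\mu_{i}}(K_{1}\odot H)=\chi(H)+1.$$
For the upper bound, I would take an optimal proper coloring of $H$ and assign $w$ a fresh color. Each color class $S\subseteq V(H)$ is already independent in $K_{1}\odot H$, and for any two vertices $u,v\in S$ the path $u$--$w$--$v$ is a $u,v$-geodesic in $K_{1}\odot H$ whose only internal vertex $w$ lies outside $S$, so $S$ is an IMV set. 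For the lower bound, since $w$ is universal it must occupy a singleton class in any proper coloring, so the remaining classes of an IMV coloring restrict to a proper coloring of $H$, forcing at least $\chi(H)+1$ colors overall.

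The only real obstacle is the visibility-through-$w$ step: one must confirm $d_{G}(u,v)=2$ (rather than $1$) for vertices of a common class, which holds because same-class vertices are non-adjacent, and then treat singleton classes as trivially IMV. Once this is in place, the equivalence $\chi(H)\le k \iff \chi_{\mu_{i}}(K_{1}\odot H)\le k+1$ yields the reduction, and since the constructed graph always contains the universal vertex $w$, the NP-completeness holds even in this restricted class.
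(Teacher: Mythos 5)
Your proposal is correct and follows essentially the same route as the paper: membership in NP by polynomial-time verification, and a reduction from \textsc{Graph Colorability} via $G'=K_{1}\odot H$ with target $k+1$, resting on the identity $\chi_{\mu_{i}}(K_{1}\odot H)=\chi(H)+1$. The only cosmetic difference is that you verify this identity directly (geodesics through the universal vertex plus the bound $\chi_{\mu_{i}}\ge\chi$), whereas the paper invokes the observation that in a graph of diameter at most $3$ every independent set is already an MV set.
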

\begin{proof}
The problem is in NP as verifying that a given vertex partition of $G$ is indeed an IMV coloring of cardinality at most $k$ can be done in polynomial time.

Consider a graph $G$ and a positive integer $k\leq n(G)$ as an instance of {\sc Graph Colorability}. We set $G'=K_{1}\odot G$ and $r=k+1$. Note that the IMV sets in $G'$ are precisely the independent sets in $G'$. Hence, $\chi_{\mu_{i}}(G')=\chi(G')=\chi(G)+1$. Consequently, because $\chi_{\mu_{i}}(G')\leq r$ if and only if $\chi(G)\leq k$, and since {\sc Graph Colorability} is NP-complete and $G'$ has a universal vertex, we deduce the statement of the theorem. 
\end{proof}

Cicerone et al.~\cite{CDK} proved the following. (In fact, they proved that if diam$(G)\leq3$, then every independent set in $G$ is also an MV set in $G$.)

\begin{proposition}\label{Pro}
Let $G$ be a graph. If \emph{diam}$(G)\leq3$, then $\mu_{i}(G)=\alpha(G)$.
\end{proposition}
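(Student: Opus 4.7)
The plan is to prove the parenthetical statement directly, namely that when $\mathrm{diam}(G)\leq 3$, every independent set $S\subseteq V(G)$ is an MV set. Once this is established, the equality $\mu_i(G)=\alpha(G)$ follows immediately: the inequality $\mu_i(G)\leq \alpha(G)$ is built into the definition of an IMV set, and an $\alpha(G)$-set, being independent, will automatically be an MV set, hence an IMV set, giving the reverse inequality.

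To prove the auxiliary claim, I would fix an arbitrary independent set $S$, take any two distinct vertices $u,v\in S$, and show that some $u,v$-geodesic has no internal vertex in $S$. Because $S$ is independent, $u$ and $v$ are nonadjacent, so $d_G(u,v)\geq 2$; combined with the hypothesis $\mathrm{diam}(G)\leq 3$, only two cases arise.

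If $d_G(u,v)=2$, then any $u,v$-geodesic is of the form $u,w,v$, and the single internal vertex $w$ is adjacent to $u\in S$, so $w\notin S$ by independence. If $d_G(u,v)=3$, then any $u,v$-geodesic has the form $u,x,y,v$; now $x$ is adjacent to $u\in S$ and $y$ is adjacent to $v\in S$, so independence of $S$ forces $x,y\notin S$. In either case the chosen geodesic certifies that $u$ and $v$ are $S$-visible, so $S$ is an MV set.

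There is essentially no obstacle here: the argument only relies on the observation that the internal vertices on \emph{any} shortest path of length at most $3$ between two vertices of $S$ are neighbors of a vertex of $S$, and so cannot themselves belong to the independent set $S$. The whole proof is a short case analysis on $d_G(u,v)\in\{2,3\}$, followed by the two-line deduction that $\alpha(G)\leq \mu_i(G)\leq \alpha(G)$.
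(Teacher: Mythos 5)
Your proof is correct, and it is exactly the argument indicated by the paper: the paper states this result as a citation to Cicerone et al.\ \cite{CDK}, noting parenthetically that they show every independent set is an MV set when ${\rm diam}(G)\le 3$, which is precisely your case analysis on $d_G(u,v)\in\{2,3\}$ (internal vertices of such a geodesic are neighbors of $u$ or $v$, hence outside the independent set). Nothing is missing; the deduction $\alpha(G)\le\mu_i(G)\le\alpha(G)$ then closes the proof as you describe.
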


Proposition \ref{Pro} does not necessarily hold for graphs with diam$(G)\geq4$. For instance, $\mu_{i}(P_{5})=2$ and $\alpha(P_{5})=3$ (\cite{CDK}). 

In view of the clear inequality $\mu_{i}(G)\leq \alpha(G)$ for any graph $G$ and Proposition \ref{Pro}, one might hope to practically characterize the extremal graphs for this inequality. The following result shows that this hope is in vain (even for graphs with diameter $4$).

\begin{theorem}\label{Hard}
It is NP-hard to decide whether $\mu_{i}(G)=\alpha(G)$.
\end{theorem}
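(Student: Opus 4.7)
The plan is a polynomial-time reduction from \textsc{Independent Set}. Given an instance $(H,k)$, I would construct a graph $G = G(H,k)$ of diameter exactly $4$ such that $\mu_{i}(G) = \alpha(G)$ if and only if $\alpha(H) \geq k$. Note that by Proposition~\ref{Pro} any such reduction must produce graphs with $\mathrm{diam}(G) \geq 4$, so aiming for diameter $4$ is the minimal nontrivial option.

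The guiding idea is to transplant the $P_{5}$-obstruction into a larger graph. Recall that $\mu_{i}(P_{5})=2<3=\alpha(P_{5})$ holds because the unique maximum independent set $\{v_{1},v_{3},v_{5}\}$ is not IMV: the only $v_{1},v_{5}$-geodesic has $v_{3}$ as an internal vertex. The graph $G$ will embed analogous obstructions into a gadget $F_{k}$ attached to $H$ through a universal connector, in such a way that each obstruction can be ``bypassed'' by rerouting its length-$4$ geodesic through a vertex of $V(H)$ \emph{outside} a maximum independent set of $H$. Calibrating the number of such obstructions to $k$, I will arrange that: (a) $\alpha(G)=\alpha(H)+c_{k}$ for an explicit constant $c_{k}$; (b) every $\alpha(G)$-set decomposes as $S=S_{H}\cup S_{F}$, with $S_{H}$ a maximum IS of $H$ and $S_{F}$ a canonical IS of $F_{k}$; and (c) such an $S$ is IMV precisely when $V(H)\setminus S_{H}$ has enough vertices to serve as bypasses for all obstructions in $F_{k}$, which is engineered to be equivalent to $|S_{H}|\geq k$.

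After assembling $G$, the verification would proceed by (i) computing $\alpha(G)$ by a direct case analysis; (ii) enumerating all $\alpha(G)$-sets to confirm the structure in (b); and (iii) checking IMV-hood of each such set by listing all length-$\leq 4$ geodesics between its pairs of vertices. A convexity argument in the spirit of Lemma~\ref{lem:convex} should ensure that the required bypass vertices inside $F_{k}$ can only be provided by $V(H)\setminus S_{H}$, tying IMV-hood directly to the size of $\alpha(H)$.

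The main obstacle I expect is step (iii): designing $F_{k}$ so that its obstructions are resolvable \emph{exactly} at the threshold $\alpha(H)=k$, neither sooner nor later, and confirming that no non-canonical $\alpha(G)$-set sidesteps the obstructions. Concretely, this amounts to wiring the gadget to contain exactly the right number of independent $P_{5}$-style paths whose central vertices are common neighbors with $H$, while still constraining the global diameter to $4$ through the universal connector. Balancing these two constraints---flexibility inside the gadget and rigidity of the diameter---is the delicate core of the argument.
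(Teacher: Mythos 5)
Your submission is a plan, not a proof: the gadget $F_{k}$ is never constructed, the graph $G(H,k)$ is never defined, and claims (a)--(c) are asserted rather than established. You yourself identify the ``delicate core'' --- wiring $F_{k}$ so that the obstructions are resolvable exactly at the threshold $\alpha(H)=k$ while keeping the diameter at $4$ --- and then leave it unresolved. Beyond the missing construction, the plan has a directional flaw. A bypass vertex must be an \emph{internal} vertex of a geodesic between two vertices of the candidate IMV set $S$, hence must lie \emph{outside} $S$; in your scheme it must come from $V(H)\setminus S_{H}$, a set of size $n(H)-\alpha(H)$. So the supply of bypasses \emph{shrinks} as $\alpha(H)$ grows, and any counting-based calibration naturally yields ``$\mu_{i}(G)=\alpha(G)$ iff $\alpha(H)$ is \emph{small},'' i.e., a reduction from the complement of \textsc{Independent Set} (giving coNP-hardness, not the NP-hardness claimed). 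Nothing in the proposal explains how to invert this monotonicity, and since maximum independent sets of an arbitrary $H$ have no controlled structure, it is not clear that a purely cardinality-based threshold can be engineered at all.

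The paper avoids exactly this difficulty by reducing from $3$-SAT instead. Its variable gadget (the tree $T_{i}$ on $\{p_{i},p_{i}',q_{i},r_{i},s_{i}\}$, together with the triangle $xyz$ and the hub $c$) forces every maximum independent set to contain $r_{i},s_{i}$ and \emph{exactly one} of $p_{i},p_{i}'$; the excluded literal vertex encodes the truth value and is precisely the vertex available to serve as a bypass on the length-$4$ geodesics from $c_{j}'$ to $z$ and to the selected literal vertices. The visibility condition then becomes ``every clause vertex has an unselected literal neighbor,'' which is clause satisfaction verbatim --- a structural, per-clause condition rather than a global count. If you want to salvage your approach, you would either need to switch the source problem (e.g., reduce from \textsc{Vertex Cover} phrased so the inequality points the right way) or, better, adopt the two-states-per-variable mechanism that makes the paper's reduction work.
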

\begin{proof}
We describe a polynomial transformation from the well-known $3$-SAT problem to our problem. Consider an arbitrary instance of the $3$-SAT problem, given by $U=\{u_{1},\ldots,u_{a}\}$ (with the set of complements $U'=\{u_{1}',\ldots,u_{a}'\}$) and a collection $C=\{C_{1},\ldots,C_{b}\}$ of $3$-variable clauses over $U\cup U'$. We consider:\vspace{-2mm}
\begin{itemize}
\item[$\bullet$] a path $c_{j}c_{j}'$ associated with every clause $C_{j}$, and join a new vertex $c$ to all vertices in $\{c_{1}',\ldots,c_{b}'\}$,\vspace{-2mm}
\item[$\bullet$] a tree $T_{i}$, with $V(T_{i})=\{p_{i},p_{i}',q_{i},r_{i},s_{i}\}$ and $E(T_{i})=\{p_{i}p_{i}',p_{i}'q_{i},q_{i}r_{i},q_{i}s_{i}\}$, associated with every variable $u_{i}$, and\vspace{-2mm}
\item[$\bullet$] a triangle $xyzx$.\vspace{-2mm}
\end{itemize}
Finally, we join by an edge:\vspace{-2mm}
\begin{itemize}
\item[$\bullet$] $p_{i}$ to $c_{j}$ (resp. $p_{i}'$ to $c_{j}$) if $u_{i}\in C_{j}$ (resp. $u_{i}'\in C_{j}$) for each $i\in[a]$ and $j\in[b]$,\vspace{-2mm}
\item[$\bullet$] $x$ to $p_{i}$ and $p_{i}'$ for each $i\in[a]$,\vspace{-2mm}
\item[$\bullet$] $y$ to $r_{i}$ and $s_{i}$ for each $i\in[a]$, and\vspace{-2mm}
\item[$\bullet$] $c$ to $r_{i}$ and $s_{i}$ for each $i\in[a]$.\vspace{-2mm}
\end{itemize}
Let $G$ be the resulting graph (see Figure \ref{Fig1} for an example). It is clear that the construction of $G$ is accomplished in polynomial time and that $G$ is of diameter $4$. The following claim turns out to be useful in the rest of the proof.

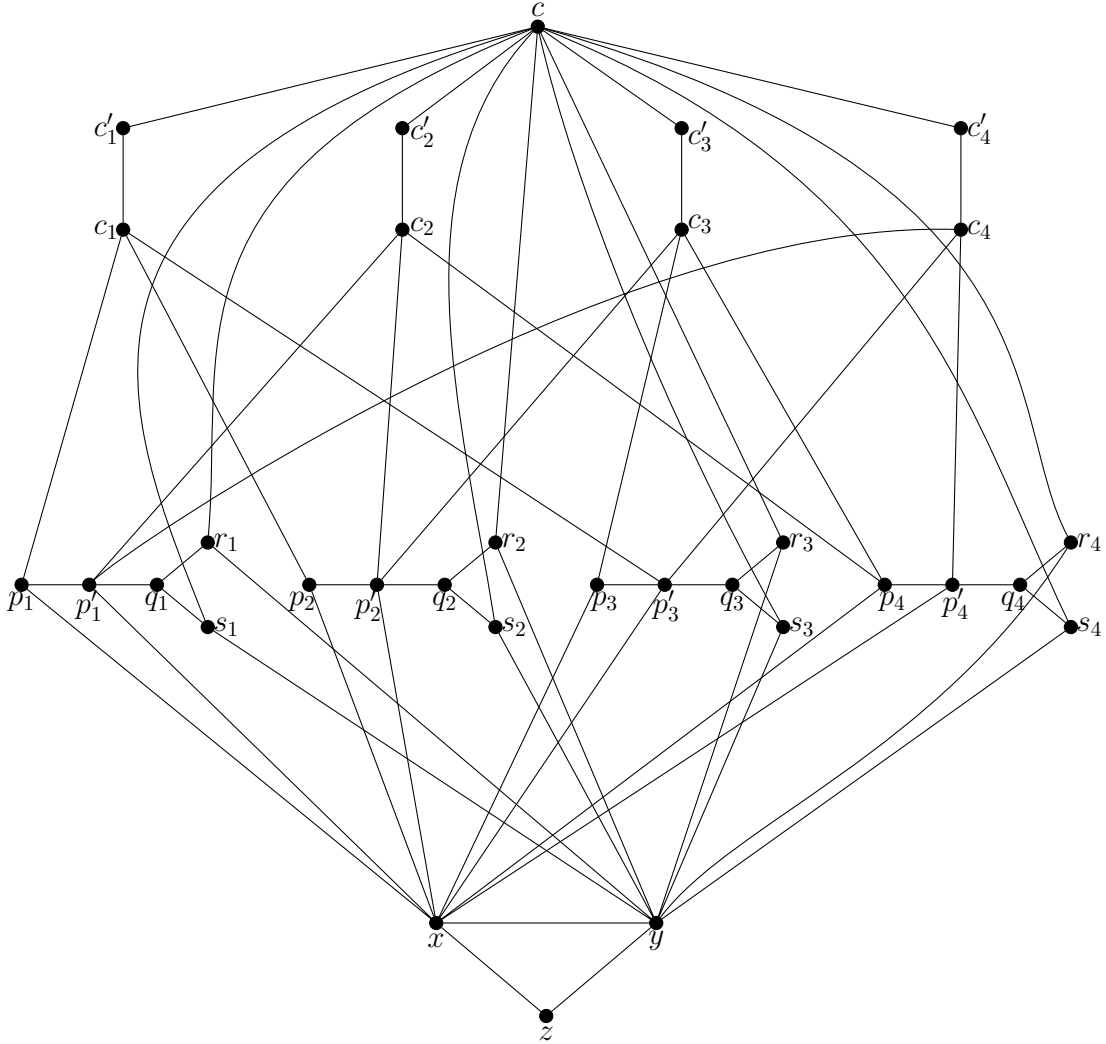
\begin{figure}[ht!]
\centering
\begin{tikzpicture}[scale=0.45, transform shape]
\node [draw, shape=circle, fill=black] (p_{1}) at (0,0) {};
\node [draw, shape=circle, fill=black] (p_{1}') at (2,0) {};
\node [draw, shape=circle, fill=black] (q_{1}) at (4,0) {};
\node [draw, shape=circle, fill=black] (r_{1}) at (5.5,1.25) {};
\node [draw, shape=circle, fill=black] (s_{1}) at (5.5,-1.25) {};
\draw (p_{1})--(p_{1}')--(q_{1});
\draw (r_{1})--(q_{1})--(s_{1});
\node [scale=1.8] at (0,-0.5) {\large $p_{1}$};
\node [scale=1.8] at (2,-0.6) {\large $p_{1}'$};
\node [scale=1.8] at (4,-0.5) {\large $q_{1}$};
\node [scale=1.8] at (6.05,1.25) {\large $r_{1}$};
\node [scale=1.8] at (6.05,-1.25) {\large $s_{1}$};

\node [draw, shape=circle, fill=black] (p_{2}) at (8.5,0) {};
\node [draw, shape=circle, fill=black] (p_{2}') at (10.5,0) {};
\node [draw, shape=circle, fill=black] (q_{2}) at (12.5,0) {};
\node [draw, shape=circle, fill=black] (r_{2}) at (14,1.25) {};
\node [draw, shape=circle, fill=black] (s_{2}) at (14,-1.25) {};
\draw (p_{2})--(p_{2}')--(q_{2});
\draw (r_{2})--(q_{2})--(s_{2});
\node [scale=1.8] at (8.3,-0.5) {\large $p_{2}$};
\node [scale=1.8] at (10.25,-0.6) {\large $p_{2}'$};
\node [scale=1.8] at (12.5,-0.5) {\large $q_{2}$};
\node [scale=1.8] at (14.55,1.25) {\large $r_{2}$};
\node [scale=1.8] at (14.55,-1.25) {\large $s_{2}$};

\node [draw, shape=circle, fill=black] (p_{3}) at (17,0) {};
\node [draw, shape=circle, fill=black] (p_{3}') at (19,0) {};
\node [draw, shape=circle, fill=black] (q_{3}) at (21,0) {};
\node [draw, shape=circle, fill=black] (r_{3}) at (22.5,1.25) {};
\node [draw, shape=circle, fill=black] (s_{3}) at (22.5,-1.25) {};
\draw (p_{3})--(p_{3}')--(q_{3});
\draw (r_{3})--(q_{3})--(s_{3});
\node [scale=1.8] at (17.23,-0.5) {\large $p_{3}$};
\node [scale=1.8] at (19.05,-0.6) {\large $p_{3}'$};
\node [scale=1.8] at (21,-0.5) {\large $q_{3}$};
\node [scale=1.8] at (23.05,1.25) {\large $r_{3}$};
\node [scale=1.8] at (23.05,-1.25) {\large $s_{3}$};

\node [draw, shape=circle, fill=black] (p_{4}) at (25.5,0) {};
\node [draw, shape=circle, fill=black] (p_{4}') at (27.5,0) {};
\node [draw, shape=circle, fill=black] (q_{4}) at (29.5,0) {};
\node [draw, shape=circle, fill=black] (r_{4}) at (31,1.25) {};
\node [draw, shape=circle, fill=black] (s_{4}) at (31,-1.25) {};
\draw (p_{4})--(p_{4}')--(q_{4});
\draw (r_{4})--(q_{4})--(s_{4});
\node [scale=1.8] at (25.73,-0.5) {\large $p_{4}$};
\node [scale=1.8] at (27.6,-0.55) {\large $p_{4}'$};
\node [scale=1.8] at (29.3,-0.5) {\large $q_{4}$};
\node [scale=1.8] at (31.55,1.25) {\large $r_{4}$};
\node [scale=1.8] at (31.55,-1.25) {\large $s_{4}$};

\node [draw, shape=circle, fill=black] (z) at (15.5,-12.75) {};
\node [draw, shape=circle, fill=black] (x) at (12.25,-10) {};
\node [draw, shape=circle, fill=black] (y) at (18.75,-10) {};
\draw (x)--(y)--(z)--(x);
\node [scale=1.8] at (12.25,-10.5) {\large $x$};
\node [scale=1.8] at (18.75,-10.5) {\large $y$};
\node [scale=1.8] at (15.5,-13.25) {\large $z$};

\draw (p_{1})--(x)--(p_{1}');
\draw (p_{2})--(x)--(p_{2}');
\draw (p_{3})--(x)--(p_{3}');
\draw (p_{4})--(x)--(p_{4}');

\draw (r_{1})--(y)--(s_{1});
\draw (r_{2})--(y)--(s_{2});
\draw (r_{3})--(y)--(s_{3});
\draw (s_{4})--(y);
\draw (y) .. controls (21,-7) and (27,-5) .. (r_{4});

\node [draw, shape=circle, fill=black] (c_{1}') at (3,13.5) {};
\node [draw, shape=circle, fill=black] (c_{1}) at (3,10.5) {};
\draw (c_{1})--(c_{1}');
\node [scale=1.8] at (2.5,13.5) {\large $c_{1}'$};
\node [scale=1.8] at (2.5,10.5) {\large $c_{1}$};

\node [draw, shape=circle, fill=black] (c_{2}') at (11.25,13.5) {};
\node [draw, shape=circle, fill=black] (c_{2}) at (11.25,10.5) {};
\draw (c_{2})--(c_{2}');
\node [scale=1.8] at (11.85,13.4) {\large $c_{2}'$};
\node [scale=1.8] at (11.85,10.6) {\large $c_{2}$};

\node [draw, shape=circle, fill=black] (c_{3}') at (19.5,13.5) {};
\node [draw, shape=circle, fill=black] (c_{3}) at (19.5,10.5) {};
\draw (c_{3})--(c_{3}');
\node [scale=1.8] at (20.05,13.27) {\large $c_{3}'$};
\node [scale=1.8] at (20.05,10.68) {\large $c_{3}$};

\node [draw, shape=circle, fill=black] (c_{4}') at (27.75,13.5) {};
\node [draw, shape=circle, fill=black] (c_{4}) at (27.75,10.5) {};
\draw (c_{4})--(c_{4}');
\node [scale=1.8] at (28.3,13.5) {\large $c_{4}'$};
\node [scale=1.8] at (28.3,10.5) {\large $c_{4}$};

\node [draw, shape=circle, fill=black] (c) at (15.25,16.5) {};
\draw (c_{1}')--(c)--(c_{2}');
\draw (c_{3}')--(c)--(c_{4}');
\node [scale=1.8] at (15.25,17) {\large $c$};

\draw (p_{1})--(c_{1})--(p_{2});
\draw (c_{1})--(p_{3}');

\draw (p_{1}')--(c_{2})--(p_{2}');
\draw (c_{2})--(p_{4});

\draw (p_{2}')--(c_{3})--(p_{3});
\draw (c_{3})--(p_{4});

\draw (c_{4})--(p_{3}');
\draw (c_{4})--(p_{4}');
\draw (p_{1}') .. controls (3,1) and (17.3,10.8) .. (c_{4});

\draw (c) .. controls (4,12) and (6,7) .. (r_{1});
\draw (c) .. controls (0,12) and (3,5) .. (s_{1});

\draw (c) .. controls (11,12) and (13,5) .. (s_{2});
\draw (c)--(r_{2});

\draw (c)--(r_{3});
\draw (c) .. controls (16.25,12) and (19,5) .. (s_{3});

\draw (c) .. controls (31,12) and (29,5) .. (r_{4});
\draw (c) .. controls (26.5,12) and (28,5) .. (s_{4});

\end{tikzpicture}
\caption{{\small An illustration of the graph $G$, constructed in the proof of Theorem \ref{Hard}, with $a=b=4$. Here, $U=\{u_{1},u_{2},u_{3},u_{4}\}$, $C_{1}=\{u_{1},u_{2},u_{3}'\}$, $C_{2}=\{u_{1}',u_{2}',u_{4}\}$, $C_{3}=\{u_{2}',u_{3},u_{4}\}$ and $C_{4}=\{u_{1}',u_{3}',u_{4}'\}$. Notice that $\big{(}f(u_{1}),f(u_{2}),f(u_{3}),f(u_{4})\big{)}=\big{(}True,False,True,False\big{)}$ is a satisfying truth assignment for $C$ and $\bigcup_{i=1}^{4}\{c_{i}',r_{i},s_{i}\}\cup \{z\}\cup \{p_{1}',p_{2},p_{3}',p_{4}\}$ is an IMV set in $G$ of cardinality $\alpha(G)=3a+b+1=17$.}}
\label{Fig1}
\end{figure}

\medskip
\noindent\textit{Claim A.} $\alpha(G)=3a+b+1$.\vspace{0.75mm}\\
\textit{Proof of Claim A.} We observe that $J=\{c_{1}',\ldots,c_{b}'\}\cup(\bigcup_{i=1}^{a}\{r_{i},s_{i},p_{i}\})\cup \{z\}$ is an independent set in $G$. Therefore, $\alpha(G)\geq|J|=3a+b+1$. On the other hand, if $I$ is an $\alpha(G)$-set, it necessarily follows that $I$ has at most $(i)$ $b$ vertices from $\{c_{1},\ldots,c_{b},c_{1}',\ldots,c_{b}'\}$, $(ii)$ $3$ vertices from each subgraph $T_{i}$, and $(iii)$ $1$ vertex from the triangle $xyzx$. Moreover, if $c\in I$, then no vertices from $\bigcup_{i=1}^{a}\{r_{i},s_{i}\}$ belong to $I$. In such a situation,
\begin{center}
$\alpha(G)=|I|=1+|I\cap \{c,c_{1},\ldots,c_{b},c_{1}',\ldots,c_{b}'\}|+|I\cap(\bigcup_{i=1}^{a}\{p_{i},p_{i}',q_{i}\})|+|I\cap \{x,y,z\})|\leq2a+b+2$,
\end{center}
a contradiction. Hence, $c\notin I$ and we have $|I|\leq3a+b+1$ due to the statements $(i)$, $(ii)$ and $(iii)$. This, together with $|I|\geq3a+b+1$, implies the desired equality. $(\square)$

\medskip
Now, assume that $f:U\longrightarrow \{True,False\}$ is a satisfying truth assignment for $C$. We set
\begin{center}
$M=\big{\{}c_{i}'\big{\}}_{i=1}^{b}\bigcup \big{\{}r_{i},s_{i}\big{\}}_{i=1}^{a}\bigcup \big{\{}z\big{\}}\bigcup \big{\{}p_{i}\mid i\in[a]\ \mbox{and}\ f(p_{i})=False\big{\}}\bigcup \big{\{}p_{i}'\mid i\in[a]\ \mbox{and}\ f(p_{i})=True\big{\}}$,
\end{center} 
and show that $M$ is an IMV set in $G$. It is clear from the structure that $M$ is independent in $G$. Consider any vertex $c_{j}'$ from $\{c_{1}',\ldots,c_{b}'\}$. For every $i\in[b]\setminus \{j\}$, $c_{j}'cc_{i}'$ is a geodesic between $c_{j}'$ and $c_{i}'$ passing through $c\notin M$. Also, $c_{j}'cr_{i}$ and $c_{j}'cs_{i}$ are a $c_{j}',r_{i}$-geodesic and a $c_{j}',s_{i}$-geodesic, respectively, for all $i\in[a]$, whose internal vertices do not belong to $M$.

Consider an arbitrary index $i\in[a]$. Suppose that $c_{j}$ has a neighbor $w\in \{p_{i},p_{i}'\}$. If $w\in M$, then $c_{j}'c_{j}w$ is a $c_{j}',w$-geodesic, in which $c_{j}\notin M$. If $w\notin M$, then $c_{j}'c_{j}ww'$ is a geodesic between $c_{j}'$ and $w'\in \{p_{i},p_{i}'\}\setminus \{w\}$ whose internal vertices are not contained in $M$. Assume now that $c_{j}$ does not have any neighbor in $\{p_{i},p_{i}'\}$ and that $w\in \{p_{i},p_{i}'\}\cap M$. It is clear from the structure that $d_{G}(c_{j}',w)=4$. Since $f$ is a satisfying truth assignment for $C$, it follows that $c_{j}$ is adjacent to a vertex $v\in \{p_{k},p_{k}'\}\cap(V(G)\setminus M)$ for some $k\in[a]$. In view of this, $c_{j}'c_{j}vxw$ turns out to be a $c_{j}',w$-geodesic with internal vertices not in $M$. Moreover, $c_{j}'c_{j}vxz$ is a $c_{j}',z$-geodesic of length $d_{G}(c_{j}',z)=4$ whose internal vertices do not belong to $M$. 

Let $i,j\in[a]$ be arbitrary distinct indices. It is easily checked that between any two vertices in $V(T_{i})\cap M$ there exists a geodesic, of length $2$ or $3$, whose internal vertices are not contained in $M$. Note that there is a geodesic of length $2$ with one endvertex in $\{r_{i},s_{i}\}$ (resp. $\{p_{i},p_{i}'\}\cap M$) and the other in $\{r_{j},s_{j}\}$ (resp. $\{p_{j},p_{j}'\}\cap M$) which passes through $y\notin M$ (resp. $x\notin M$). Moreover, there exists a geodesic of length $3$ passing through $x,y\notin M$ between any vertex in  $\{r_{i},s_{i}\}$ and the unique vertex in $\{p_{j},p_{j}'\}\cap M$.

Finally, for each vertex $w$ in $\{r_{i},s_{i}\}$ (resp. $\{p_{i},p_{i}'\}\cap M$), $wyz$ (resp. $wxz$) is a $w,z$-geodesic whose internal vertex is not contained in $M$. All in all, we have proved that $M$ is an IMV set in $G$ of cardinality $3a+b+1$. Therefore, $|M|=3a+b+1=\alpha(G)$ by Claim A. This immediately implies that $\mu_{i}(G)=\alpha(G)$ as $\alpha(G)\geq \mu_{i}(G)\geq|M|=3a+b+1$.

Conversely, assume that $\mu_{i}(G)=\alpha(G)$. In particular, $G$ has a $\mu_{i}(G)$-set $Q$ of cardinality $3a+b+1$ due to Claim A. Because $Q$ is an independent set in $G$, it follows that $Q$ fulfills the statements $(i)$, $(ii)$ and $(iii)$ mentioned in the proof of Claim A. Similarly to the proof of Claim A, we deduce that $Q$ has precisely the mentioned number of vertices in $(i)$, $(ii)$ and $(iii)$, respectively. The resulting equality $|Q\cap V(T_{i})|=3$, for each $i\in[a]$, necessarily implies that $r_{i},s_{i}\in Q$ and that precisely one of $p_{i}$ and $p_{i}'$ belongs to $Q$. In view of this, we deduce that $Q\cap \{c,x,y\}=\emptyset$, that $z\in Q$, and that exactly one vertex from $\{c_{j},c_{j}'\}$ belongs to $Q$ for each $j\in[b]$. On the other hand, we observe that $d_{G}(c_{j},z)=3$ and that all $c_{j},z$-geodesics pass through $\bigcup_{i=1}^{a} \{p_{i},p_{i}'\}$. Moreover, $d_{G}(c_{j}',z)=4$ and each $c_{j}',z$-geodesic passes through $\bigcup_{i=1}^{a} \{p_{i},p_{i}'\}$ or $\bigcup_{i=1}^{a} \{r_{i},s_{i}\}$. However, if $c_{j}'\in Q$, then $c_{j}'$ and $z$ are not mutually visible through a geodesic that intersects $\bigcup_{i=1}^{a} \{r_{i},s_{i}\}$.

Let $j\in[b]$ be arbitrarily chosen. In view of the above argument, and taking into account the facts that $Q$ is an MV set in $G$ with $|Q\cap \{c_{j},c_{j}'\}|=1$, we infer that $c_{j}$ is adjacent to at least one vertex in $(V(G)\setminus Q)\cap(\bigcup_{i=1}^{a}\{p_{i},p_{i}'\})$. With this in mind, $g:U\longrightarrow \{True,False\}$ defined by $g(u_{i})=True$ if and only if $p_{i}\notin Q$, turns out to be a satisfying truth assignment for $C$. This completes the proof.
\end{proof}


\section{Bounds and exact values}\label{BE}

Since the problem of finding the IMV chromatic number of graphs is NP-hard, as already shown, it is desirable to find tight lower and upper bounds for it or compute its exact value for specific graph classes. We hence center our attention in this section into bounding this parameter in terms of other graph parameters/invariants. 

\begin{theorem}\label{th:bound-tringle-free-mu_i}
If $G$ is a connected triangle-free graph of order $n\geq2$, then 
\begin{center}
$\chi_{\mu_{i}}(G)\le \left\lceil\dfrac{n-\mu_{i}(G)}{2}\right\rceil+1$,
\end{center}
and this bound is sharp.
\end{theorem}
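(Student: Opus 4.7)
The plan is to fix a $\mu_i(G)$-set $M$, set $V'=V(G)\setminus M$ and $m=|V'|=n-\mu_i(G)$, and construct an IMV coloring $\{M\}\cup\mathcal{P}$, where $\mathcal{P}$ partitions $V'$ into at most $\lceil m/2\rceil$ IMV sets. Two basic observations drive everything: any two non-adjacent vertices of $G$ form an IMV set (independence is clear, and the condition on internal vertices of a geodesic is vacuous for a $2$-set), and every singleton is IMV. Hence any matching in $H^{*}:=\overline{G[V']}$, together with the unmatched vertices taken as singletons, already gives an IMV partition of $V'$. Since $G$ is triangle-free, $\omega(G[V'])\le 2$, i.e.\ $\alpha(H^{*})\le 2$; hence the deficiency (= number of unmatched vertices in a maximum matching of $H^{*}$) is at most $2$, and has the parity of $m$.

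If the deficiency is at most $1$ (which is always the case for $m$ odd), a maximum matching of $H^{*}$ together with the at most one unmatched vertex gives $\lceil m/2\rceil$ IMV classes for $V'$, which with $M$ yields the desired $\lceil m/2\rceil+1$ classes. The delicate case is $m$ even with deficiency $2$. First I would show that $H^{*}$ has at most two components (otherwise, one vertex from each of three components would give an independent triple in $H^{*}$); deficiency $2$ then forces exactly two components $C_1,C_2$ both of odd size (Chv\'{a}tal-Erd\H{o}s applied to each component produces a Hamilton path, hence a matching of size $\lfloor|C_i|/2\rfloor$, and the total deficiency is $(|C_1|\bmod 2)+(|C_2|\bmod 2)$). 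Furthermore, triangle-freeness of $G[V']$ applied to any two vertices of $C_i$ (when $|C_i|\ge 2$) and any vertex of $C_{3-i}$ shows that $C_i$ is independent in $G[V']$; consequently $G[V']=K_{|C_1|,|C_2|}$ with both part sizes odd.

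If $m\ge 4$, I propose the three-class partition $\{M,C_1,C_2\}$: each $C_i$ is independent in $G$, and any two vertices of $C_i$ share a common neighbor in $C_{3-i}$, producing a length-$2$ geodesic whose internal vertex is outside $C_i$, so $C_i$ is IMV. This uses $3\le m/2+1$ classes. The only remaining case is $m=2$, with $V'=\{u_1,u_2\}$ and $u_1u_2\in E(G)$, where the bound demands $\chi_{\mu_i}(G)\le 2$. Set $W_i=N_G(u_i)\cap M$. Triangle-freeness forces $W_1\cap W_2=\emptyset$; connectedness of $G$ together with independence of $M$ forces $W_1\cup W_2=M$. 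The extremality of $M$ implies $W_1,W_2\neq\emptyset$: if, say, $W_1=\emptyset$, then $M=W_2$ and $M\cup\{u_1\}$ is IMV (every pair of $M$ is visible through $u_2$, and each $(u_1,w)$ through the path $u_1u_2w$), of size $\mu_i(G)+1$, a contradiction. I then take the partition $\{\{u_1\}\cup W_2,\ \{u_2\}\cup W_1\}$: independence in each class follows from triangle-freeness (no vertex of $W_2$ is adjacent to $u_1$) and the independence of $M$, while every pair in the first (resp.\ second) class is visible through $u_2$ (resp.\ $u_1$).

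The main obstacle I foresee is the subcase $m=2$, where the matching approach simply fails and one must combine the connectedness of $G$, triangle-freeness, and the extremality of $M$ (to rule out $W_1=\emptyset$) to build a two-class IMV coloring by hand. For sharpness I would use the path $P_n$ with $n\ge 2$: in a path, any three pairwise non-adjacent vertices have the middle one internal to every geodesic between the outer two, so $\mu_i(P_n)=2$, and the upper bound reads $\lceil(n-2)/2\rceil+1=\lceil n/2\rceil$; on the other hand \eqref{eq:first-ineq} gives $\chi_{\mu_i}(P_n)\ge\lceil n/\mu_i(P_n)\rceil=\lceil n/2\rceil$, so equality holds throughout.
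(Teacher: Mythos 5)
Your proof is correct. It shares the paper's overall strategy --- fix a $\mu_i(G)$-set $M$ and cover $V(G)\setminus M$ by non-adjacent pairs plus leftovers --- but the execution is genuinely different. The paper greedily peels independent pairs off $G-M$, with two exceptional remnants handled ad hoc: $G-M\cong K_2$ (giving the two-class partition $\{N_G(x),N_G(y)\}$) and a $K_{1,3}$ remnant (absorbed as a $3$-element class). You instead recast the pairing as a maximum matching in $H^*=\overline{G[V(G)\setminus M]}$, use triangle-freeness to get $\alpha(H^*)\le 2$ and hence deficiency at most $2$, and identify the only troublesome configuration structurally: $G-M$ is complete bipartite with two odd parts, which you handle for $m\ge 4$ by the three-class partition $\{M,C_1,C_2\}$ (each $C_i$ being independent with all pairs visible through a common neighbor on the other side) and for $m=2$ by the same two-class partition as the paper's Case~1. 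The matching viewpoint is cleaner and makes the exceptional case transparent; the price is the appeal to Chv\'atal--Erd\H{o}s, which is heavier than needed (that a connected graph with independence number at most $2$ has deficiency at most $1$ also follows from elementary augmenting-path or Berge--Tutte arguments), though entirely correct. Your sharpness witness $P_n$ is also simpler than the paper's corona construction: the upper bound and the lower bound $\lceil n/\mu_i(P_n)\rceil$ from \eqref{eq:first-ineq} both equal $\lceil n/2\rceil$; just restrict to $n\ge 3$ so that $\mu_i(P_n)=2$ (for $n=2$ one has $\mu_i(P_2)=1$, though the bound is still attained there).
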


\begin{proof}
Let $M$ be a $\mu_{i}(G)$-set of the triangle-free graph $G$. We set $G'=G-M$, which is a triangle-free graph as well. We need to differentiate two cases depending on the behavior of $G'$.\vspace{1mm}\\
\textit{Case 1}. $G'\cong K_{2}$. Let $V(G')=\{x,y\}$. If one of $x$ and $y$, say $x$, is of degree $1$ in $G$, then $M\cup \{x\}$ is an IMV set in $G$ of cardinality $|M|+1$, a contradiction. This shows that both $x$ and $y$ have neighbors in $M$. Since $G$ is triangle-free and because $xy\in E(G)$, it follows that $N_{G}(x)\cap N_{G}(y)=\emptyset$. Moreover, $N_{G}(x)\cup N_{G}(y)=V(G)$ as the graph $G$ is connected. In such a situation, we observe that $\{N_{G}(x),N_{G}(y)\}$ is an IMV coloring of $G$ of cardinality $2= \left\lceil\big{(}n-\mu_{i}(G)\big{)}/2\right\rceil+1$.\vspace{1mm}\\
\textit{Case 2}. $G'\ncong K_{2}$. If $n(G')=1$, then $G$ is necessarily isomorphic to a star. Therefore, $\chi_{\mu_{i}}(G)=2=\left\lceil\big{(}n-\mu_{i}(G)\big{)}/2\right\rceil+1$. So, we may assume that $n(G')\geq 2$. Since $G'\ncong K_{2}$ and because $G'$ is a triangle-free graph, there exist two non-adjacent vertices $u,v\in V(G')$ such that $G'-\{u,v\}\ncong K_{2}$ except when $G'\cong K_{1,3}$ and in this case we have three such vertices $u,v,w$ and $G'-\{u,v,w\}\cong K_1$. By iterating this process until the remaining graph is empty, we can partition $V(G')$ into sets $M_{1},\ldots,M_{k}$, for some positive integer $k$, such that\vspace{1mm}\\
$\bullet$ $|M_{k}|\in \{1,2\}$, and $|M_{i}|=2$ for each $i\in[k-2]$ (if any), \vspace{1mm}\\
$\bullet$ $|M_{k-1}|\in\{2,3\}$ (if any) where $|M_{k-1}|=3$ only when $G'\cong K_{1,3}$, and\vspace{1mm}\\ 
$\bullet$ $M_{i}$ is an independent set in $G$ for each $i\in[k]$.\vspace{1mm}

Note that each set $M_{i}$ is clearly an IMV set in the graph $G$. Therefore, $\mathcal{Q}=\{M,M_{1},\ldots,M_{k}\}$ turns out to be an IMV coloring of $G$. Thus, $\chi_{\mu_{i}}(G)\leq|\mathcal{Q}|=1+k$, which is less than or equal to the desired upper bound.

That the bound is sharp, may be seen as follows. Consider the graph $H=P\odot \overline{K_{r}}$, in which $r\geq2$ and $P=u_{1}\ldots u_{2k-1}$ is a path for some positive integer $k$. Let $H'$ be obtained from $H$ by joining a new vertex $u$ to all vertices of the $k$th copy of $\overline{K_{r}}$. The resulting graph $H'$ is clearly triangle-free. Note that $n(H')=(r+1)(2k-1)+1$ and that the union of vertex sets of the $2k-1$ copies of $\overline{K_{r}}$ is a $\mu_{i}(H')$-set of cardinality $r(2k-1)$. We define $c(u_{4i-3})=c(u_{4i-1})=2i-1$ and $c(u_{4i-2})=c(u_{4i})=2i$ for each $i$ such that the corresponding vertex is on $P$; $c(u)=c(u_{2k-1})$ if $k$ is odd; $c(u)=c(u_{2k-2})$ if $k$ is even; and $c(v)=0$ for any other vertices. It is readily observed that $c$ is a $\chi_{\mu_{i}}(H)$-coloring with $k+1=\lceil\big{(}n(H')-\mu_{i}(H')\big{)}/2\rceil+1$ colors. This completes the proof. 
\end{proof} 

In a triangle-free graph $G$, the open neighborhood of any vertex is an IMV set. Hence, it holds that $\mu_i(G)\ge \Delta(G)$, which leads to the following conclusion by using Theorem \ref{th:bound-tringle-free-mu_i}.

\begin{corollary}
\label{cor:bound-tringle-free-delta}
If $G$ is a connected triangle-free graph of order $n$, then $$\imv(G)\le \left\lceil\frac{n-\Delta(G)}{2}\right\rceil+1.$$
\end{corollary}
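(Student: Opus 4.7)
The plan is to derive this corollary directly from Theorem~\ref{th:bound-tringle-free-mu_i}, using the stated observation that $\mu_i(G)\ge \Delta(G)$ for any triangle-free graph $G$. The only substantive content is therefore to justify the inequality $\mu_i(G)\ge \Delta(G)$; once this is in place, the bound on $\chi_{\mu_i}(G)$ follows by monotonicity of the ceiling function applied to $(n-\mu_i(G))/2 \le (n-\Delta(G))/2$.

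First, I would pick a vertex $v\in V(G)$ of maximum degree and verify that $N_G(v)$ is an IMV set. Independence is immediate: if two neighbors $u,w\in N_G(v)$ were adjacent, then $uvw$ together with $uw$ would form a triangle, contradicting that $G$ is triangle-free. For visibility, take any two distinct vertices $u,w\in N_G(v)$. Since $u,w$ are non-adjacent (by what was just shown), we have $d_G(u,w)\ge 2$, while the path $uvw$ shows $d_G(u,w)\le 2$, so $uvw$ is a $u,w$-geodesic whose unique internal vertex $v$ lies outside $N_G(v)$. Hence $N_G(v)$ is $N_G(v)$-visible and independent, i.e., it is an IMV set, giving $\mu_i(G)\ge |N_G(v)|=\Delta(G)$.

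With this in hand, I would conclude by applying Theorem~\ref{th:bound-tringle-free-mu_i} to obtain
\[
\chi_{\mu_i}(G)\le \left\lceil\frac{n-\mu_i(G)}{2}\right\rceil+1 \le \left\lceil\frac{n-\Delta(G)}{2}\right\rceil+1,
\]
which is the desired inequality. There is no real obstacle here; the only care needed is the small-order edge cases (so that Theorem~\ref{th:bound-tringle-free-mu_i} is applicable, i.e., $n\ge 2$), which are already covered by the hypothesis that $G$ is connected of order $n$. The whole argument is a two-line deduction once the neighborhood-is-IMV observation is recorded.
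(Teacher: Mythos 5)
Your proposal is correct and follows exactly the paper's route: it establishes that the open neighborhood of a maximum-degree vertex in a triangle-free graph is an IMV set (independence from triangle-freeness, visibility through the common neighbor), so $\mu_i(G)\ge \Delta(G)$, and then applies Theorem~\ref{th:bound-tringle-free-mu_i}. The only difference is that you spell out the neighborhood argument, which the paper states in one line without proof.
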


\begin{proposition}\label{Pro1}
If $G$ is a graph, then $$\imv(G)\le \chi(G)\mvc(G),$$       
and the bound is sharp.
\end{proposition}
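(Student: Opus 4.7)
The plan is to take an optimal proper coloring and an optimal MV coloring and combine them into a common refinement, one of whose parts is the intersection of a color class from each of the two colorings. Concretely, let $\{A_1,\ldots,A_{\chi(G)}\}$ be the color classes of a $\chi(G)$-coloring of $G$, and let $\{B_1,\ldots,B_{\chi_\mu(G)}\}$ be the color classes of a $\chi_\mu(G)$-coloring of $G$. Define $C_{i,j} = A_i \cap B_j$ for $i\in[\chi(G)]$ and $j\in[\chi_\mu(G)]$. Since $\{A_i\}_{i}$ and $\{B_j\}_{j}$ are partitions of $V(G)$, the family $\{C_{i,j}\}_{i,j}$ (after discarding empty parts) is also a partition of $V(G)$ into at most $\chi(G)\,\chi_\mu(G)$ classes.

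The key observation, which I would record as a short standalone claim, is that any subset of an MV set is again an MV set: if $S$ is an MV set and $T\subseteq S$, then for any two vertices $u,v\in T$ there is a $u,v$-geodesic avoiding $S\setminus\{u,v\}$ in its interior, hence a fortiori avoiding $T\setminus\{u,v\}$. Applying this to $C_{i,j}\subseteq B_j$ shows that each $C_{i,j}$ is an MV set; moreover, $C_{i,j}\subseteq A_i$ is independent. Therefore every $C_{i,j}$ is an IMV set, and so $\{C_{i,j}\}_{i,j}$ is an IMV coloring of $G$ using at most $\chi(G)\,\chi_\mu(G)$ colors, which yields the desired inequality.

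For sharpness, the cleanest examples are the complete graphs: $V(K_n)$ is trivially an MV set (every pair is adjacent, so geodesics are single edges with no internal vertices), so $\chi_\mu(K_n)=1$; on the other hand, every IMV set in $K_n$ has at most one vertex, so $\imv(K_n)=n=\chi(K_n)\,\chi_\mu(K_n)$. One could mention additional examples, but $K_n$ already establishes sharpness.

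I do not anticipate a genuine obstacle here; the only subtlety worth stating explicitly is the monotonicity of MV sets under taking subsets, which makes the product-coloring construction work. Everything else is a counting argument on the refined partition.
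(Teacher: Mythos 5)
Your proof is correct and essentially matches the paper's: both refine an optimal MV coloring by a proper coloring (the paper pairs each MV class with a proper coloring of its induced subgraph, which is the same product-refinement as your $A_i\cap B_j$ construction), both rest on the hereditary property that subsets of MV sets are MV sets, and both use $K_n$ for sharpness.
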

\begin{proof}
Let $c:V(G)\rightarrow [\mvc(G)]$ be an MV coloring of $G$. Set $k=\chi(G)$. For each color class $C_i$, with $i\in [\mvc(G)]$, corresponding to $c$, consider the subgraph $H_i$ induced by $C_i$. With the resulting inequality $\chi(H_i)\le \chi(G)$ in mind, let $c_i:V(H_i)\rightarrow [k]$ be a proper coloring of the vertices in $H_i$. Now, consider the coloring $c':V(G)\rightarrow [\mvc(G)]\times [k]$ defined by $c'(v)=\big{(}c(v),c_{c(v)}(v)\big{)}$. To see that $c'$ is an IMV coloring of $G$, note that if two vertices $u$ and $v$ satisfy that $c(u)=c(v)=i$ for some $i\in [\mvc(G)]$, then $u$ and $v$ belong to the subgraph $H_i$. If $uv\notin E(G)$, then there is a $u,v$-geodesic in $G$ that avoids all vertices $w$ whose first component of $c'(w)$ equals $c(u)$. However, if $uv\in E(G)$, then their second components, $c_{i}(u)$ and $c_{i}(v)$, are different because $c_i$ is a proper coloring. Since $c'$ is an IMV coloring of $G$ with $\chi(G)\mvc(G)$ colors, the upper bound follows.  The bound is sharp for complete graphs, since $\imv(K_n)=n$, $\chi(K_n)=n$ and $\mvc(K_n)=1$. 
\end{proof}

As a consequence of Proposition \ref{Pro1}, we have the following upper bound for bipartite graphs.

\begin{corollary}
If $G$ is a bipartite graph, then $\imv(G)\le 2\mvc(G)$.    
\end{corollary}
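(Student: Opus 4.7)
The plan is to invoke Proposition~\ref{Pro1} directly. For any bipartite graph $G$, we have $\chi(G)\le 2$, and substituting this inequality into the bound $\imv(G)\le \chi(G)\,\mvc(G)$ from Proposition~\ref{Pro1} immediately yields $\imv(G)\le 2\mvc(G)$.

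If a more self-contained argument is desired, I would mimic the construction from the proof of Proposition~\ref{Pro1} in this restricted setting. Starting from an MV coloring $c:V(G)\to[\mvc(G)]$, each color class $C_i$ induces a subgraph $H_i$ of the bipartite graph $G$, which is itself bipartite. Hence, $V(H_i)$ splits into two independent sets $A_i$ and $B_i$. Assigning to every vertex $v\in A_i$ the label $(i,1)$ and to every vertex $v\in B_i$ the label $(i,2)$ produces a coloring with at most $2\mvc(G)$ color classes. Each such class is independent by construction, and it is contained in the MV set $C_i$; hence it inherits the mutual-visibility property, since any geodesic in $G$ avoiding internal vertices of $C_i$ automatically avoids internal vertices of any subset of $C_i$.

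There is no genuine obstacle here: the corollary is a direct specialization, and the only minor point to check is that the refinement of an MV set into independent subsets preserves mutual visibility, which is immediate from the definition.
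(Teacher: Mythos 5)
Your proof is correct and matches the paper's reasoning: the corollary is obtained exactly by plugging $\chi(G)\le 2$ into Proposition~\ref{Pro1}, which is all the paper does. Your additional self-contained refinement argument is also sound (any subset of an MV set is again an MV set), but it is not needed beyond the direct specialization.
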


In~\cite{KKVY}, two classes of graphs $G$ satisfying the property $\mvc(G)=2$ were found, but a complete characterization of such graphs seems to be a challenging problem. In the case of IMV coloring, we give the following characterization. 

\begin{theorem}
\label{thm:imv=2}    
If $G$ is a connected graph, then $\imv(G)=2$ if and only if $G$ is bipartite and $1\le {\rm diam}(G)\le 3$. 
\end{theorem}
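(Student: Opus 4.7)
The plan is to prove the two implications separately, using the already established inequality $\imv(G)\ge \chi(G)$ from \eqref{eq:first-ineq} for the lower bound, Proposition \ref{Pro} for the upper bound, and a parity argument exploiting the uniqueness (up to swap) of the $2$-coloring of a connected bipartite graph for the converse. I expect no serious obstacle here: the statement is really a combination of three ingredients that have already been set up in the preliminaries.

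For the direction $(\Leftarrow)$, assume $G$ is connected, bipartite with bipartition $(A,B)$, and $1\le\text{diam}(G)\le 3$. Since $\text{diam}(G)\ge 1$, we have $n(G)\ge 2$, which together with $G$ being bipartite gives $\chi(G)=2$, and so $\imv(G)\ge 2$ by \eqref{eq:first-ineq}. For the reverse inequality I exhibit the partition $\{A,B\}$ as an IMV $2$-coloring: both $A$ and $B$ are independent, and since $\text{diam}(G)\le 3$, Proposition \ref{Pro} guarantees that every independent set is simultaneously an MV set, so each of $A$ and $B$ is an IMV set. Hence $\imv(G)\le 2$.

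For $(\Rightarrow)$, assume $\imv(G)=2$. Any IMV coloring is in particular a proper coloring, so $G$ is bipartite; and the existence of two color classes forces $n(G)\ge 2$, so $\text{diam}(G)\ge 1$. Suppose for a contradiction that $\text{diam}(G)\ge 4$. I would pick a diametral shortest path and look at any subpath of length $4$ with endpoints $u,v$; then $d_G(u,v)=4$. Because $G$ is connected and bipartite, the IMV $2$-coloring coincides (up to swapping labels) with the unique bipartition $(A,B)$, and since $d_G(u,v)$ is even, $u$ and $v$ lie in the same part, say both in $A$. Now any $u,v$-geodesic $P$ in $G$ has length equal to $d_G(u,v)=4$ (parity of paths in a bipartite graph forces this), and the vertex $w$ at distance $2$ from $u$ along $P$ is an internal vertex of $P$ at even distance from $u$, hence $w\in A$. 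Thus every $u,v$-geodesic contains an internal vertex of the same color class as $u$ and $v$, contradicting that $A$ is an MV set. Therefore $\text{diam}(G)\le 3$.

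The only mildly delicate step is the parity argument in the last paragraph: it relies on the fact that in a connected bipartite graph the bipartition is essentially the unique proper $2$-coloring, and that all $u,v$-paths have length of the same parity as $d_G(u,v)$, which pins the middle vertex of any geodesic of length $4$ to the color class of its endpoints. Everything else is a direct appeal to previously stated results.
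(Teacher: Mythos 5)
Your proof is correct and takes essentially the same route as the paper: the backward direction combines $\imv(G)\ge\chi(G)=2$ with the fact that in a graph of diameter at most $3$ every independent set is an MV set, and the forward direction derives a contradiction from a pair of vertices at distance $4$ via the parity forced by a proper $2$-coloring (the paper uses independence of the other color class to force a length-$2$ visibility path, while you pin the middle vertex of every geodesic to the endpoints' class --- the same parity argument). One small citation point: the fact that every independent set is an MV set when ${\rm diam}(G)\le 3$ is the remark from \cite{CDK} stated just before Proposition \ref{Pro}, not Proposition \ref{Pro} itself, since $\mu_i(G)=\alpha(G)$ alone does not yield it.
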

\begin{proof}
Let $G$ be a connected graph with $\imv(G)=2$, and let $c:V(G)\rightarrow [2]$ be a $\imv(G)$-coloring. In particular, $G$ has at least one edge. So, $\chi(G)\ge 2$. Furthermore, $\chi(G)\le\imv(G)=2$ implies that $G$ is bipartite. Suppose to the contrary that ${\rm diam}(G)\ge 4$. Let $u$ and $v$ be two vertices in $G$ with $d_G(u,v)=4$. Let $uu_1u_2u_3v$ be any $u,v$-geodesic. Since $c$ is a proper $2$-coloring of $G$, we infer that $c(u)=c(u_2)=c(v)$ and that $c(u_1)=c(u_3)$. Since $\{g\in V(G)\mid c(g)=c(u)\}$ is an MV set, $\{g\in V(G)\mid c(g)=c(u_{1})\}$ is independent and $\chi_{\mu_{i}}(G)=2$, it follows that there exists a path $uwv$ in $G$ such that $c(w)=c(u_{1})$. This contradicts the fact that $d_{G}(u,v)=4$. Hence, $G$ is indeed a bipartite graph with ${\rm diam}(G)\in \{1,2,3\}$.

Conversely, $\chi(G)=2$ as $G$ is bipartite and $\imv(G)=\chi(G)=2$ by the remark before Proposition \ref{Pro}.
\end{proof}



Next, we turn our attention to another lower bound on the IMV chromatic number, namely $\imv(G)\ge \chi_\mu(G)$. We can prove it is also achieved by several well-known graph classes. In particular, this includes (almost all) cycles and trees.

\begin{proposition}\label{cycle}
If $n\ge 6$ or $n=4$, then $$\imv(C_n)=\chi_\mu(C_n)=\Big\lceil \frac{n}{3}\Big\rceil.$$
In addition, $\imv(C_n)=3$ if $n\in\{3,5\}$, $\chi_\mu(C_5)=2$ and $\chi_\mu(C_3)=1$.
\end{proposition}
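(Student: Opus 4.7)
The lower bound rests on the fact that $\mu(C_n)=3$ for every $n\ge 3$: any four cyclically-ordered vertices $a,b,c,d$ on the cycle force every $a,c$-geodesic to travel along one of the two arcs between $a$ and $c$, each of which contains $b$ or $d$, so $\{a,b,c,d\}$ cannot be an MV set. Since $\mu_{i}(C_n)\le \mu(C_n)=3$ and each color class of an (I)MV coloring is an (I)MV set, this immediately yields $\mvc(C_n),\imv(C_n)\ge\lceil n/3\rceil$ for all $n\ge 4$.

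For the matching upper bound when $n\ge 6$, I would write $n=3k+r$ with $k\ge 2$ and $r\in\{0,1,2\}$ and exhibit an explicit IMV partition of $C_n$ into $\lceil n/3\rceil$ classes. When $r=0$, take the $k$ triples $C_i=\{v_i,v_{i+k},v_{i+2k}\}$ for $i=0,\dots,k-1$, whose members lie at pairwise distance $k$; the short arc of length $k$ between any two of them is a geodesic avoiding the third. When $r=1$, use the same $k$ triples together with the singleton $\{v_{3k}\}$; the only nontrivial case to check is that $v_i$ and $v_{i+2k}$ lie at distance $k+1$ through the ``long'' arc, whose unique geodesic passes through $v_{3k},v_{n-1},\ldots$ and avoids $v_{i+k}$. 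When $r=2$, take the $k$ asymmetric triples $C_i=\{v_i,v_{i+k},v_{i+2k+1}\}$ together with the pair $\{v_{2k},v_{3k+1}\}$; a routine check shows this partitions $V(C_n)$, each class is independent (minimum pairwise distance at least $k\ge 2$), and each pair of elements in a class sees a geodesic through the direct short arc (length at most $k+1\le n/2$) avoiding the remaining member. The case $n=4$ is witnessed by the antipodal bipartition $\{v_0,v_2\},\{v_1,v_3\}$.

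Because every IMV coloring is an MV coloring, the same partitions give $\mvc(C_n)\le\imv(C_n)\le\lceil n/3\rceil$, which combined with the lower bound forces equality for $n\ge 6$ or $n=4$. For the exceptional small cases, $C_3=K_3$ admits the single MV class $V(C_3)$ (so $\mvc(C_3)=1$), but $\alpha(C_3)=1$ forces $\imv(C_3)=3$. For $C_5$, the partition $\{v_0,v_1,v_3\},\{v_2,v_4\}$ is an MV $2$-coloring, so $\mvc(C_5)=2$; on the other hand, $\imv(C_5)\ge\chi(C_5)=3$ by \eqref{eq:first-ineq}, and the IMV partition $\{v_0,v_2\},\{v_1,v_3\},\{v_4\}$ certifies $\imv(C_5)=3$.

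The main obstacle will be the $n\equiv 2\pmod 3$ construction: the natural ``equidistant triples'' template fails since $n$ is not divisible by $3$, and one must carefully offset one element of each triple (replacing $v_{i+2k}$ by $v_{i+2k+1}$) so that the leftover two vertices still form an IMV pair while keeping all pairwise distances inside the legal range for mutual visibility in $C_n$.
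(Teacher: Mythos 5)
Your proposal is correct and follows essentially the same route as the paper: the lower bound $\big\lceil n/3\big\rceil$ via $\mu(C_n)=3$ together with the fact that every color class is an MV set, a matching upper bound from explicit near-equidistant triple partitions, and direct verification of the small cases $C_3$, $C_4$, $C_5$. The only deviations are cosmetic — you prove $\mu(C_n)\le 3$ directly instead of citing it, and for $n\equiv 2\pmod 3$ you use the shifted triples $\{v_i,v_{i+k},v_{i+2k+1}\}$ with leftover pair $\{v_{2k},v_{3k+1}\}$ rather than the paper's modification of one symmetric triple, and both constructions check out.
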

\begin{proof}
Since $\mu(C_{n})=3$ (\cite{DiS}) and $\chi_\mu(G)\geq \lceil n(G)/\mu(G)\rceil$ for each graph $G$ (\cite{KKVY}), it follows by definitions that $\imv(C_n)\geq \chi_\mu(C_n)\geq \lceil n/3\rceil$. Assume that $n\geq6$ and $C_{n}=v_{1}v_{2}\ldots v_{n}v_{1}$.\vspace{0.5mm}\\
$\bullet$ If $n=3k$ for some positive integer $k$, then the sets $V_{i}=\{v_{i},v_{i+k},v_{i+2k}\}$, for $i\in[k]$, form an IMV coloring of $C_{n}$.\vspace{0.5mm}\\
$\bullet$ Assume that $n=3k+1$ for some positive integer $k$. Then, the sets $V_{i}=\{v_{i},v_{i+k},v_{i+2k}\}$, for $i\in[k]$, and $V_{k+1}=\{v_{3k+1}\}$ form an IMV coloring of $C_{n}$.\vspace{0.5mm}\\
$\bullet$ Finally, assume that $n=3k+2$ for some positive integer $k$. In such a situation, we observe that the sets $V_{i}=\{v_{i},v_{i+k},v_{i+2k}\}$ for $i\in[k-1]$, $V_{k}=\{v_{k},v_{2k},v_{3k+1}\}$ and $V_{k+1}=\{v_{3k},v_{3k+2}\}$ form an IMV coloring of $C_{n}$.\vspace{0.5mm}\\
In each case, the resulting IMV coloring has cardinality $\lceil n/3\rceil$. This leads to $\imv(C_n)=\chi_\mu(C_n)=\lceil n/3\rceil$ when $n\geq6$. On the other hand, it is readily verified that $\imv(C_4)=\chi_\mu(C_4)=\chi_\mu(C_5)=2$, $\imv(C_3)=\imv(C_5)=3$ and $\chi_\mu(C_3)=1$.
\end{proof}

Next, we consider trees. 

\begin{theorem}\label{thm:Cartesiantreescomplete}
If $T$ is a tree on at least three vertices, then $\imv(T)=\chi_\mu(T)$.    
\end{theorem}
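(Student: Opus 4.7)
Since equation~\eqref{eq:first-ineq} already gives $\chi_{\mu_{i}}(T)\geq \chi_{\mu}(T)$, the task reduces to proving $\chi_{\mu_{i}}(T)\leq \chi_{\mu}(T)$. The whole approach hinges on the following structural lemma that I would establish first: \emph{if $T$ is a tree and $S\subseteq V(T)$ is an MV set with $|S|\geq 3$, then $S$ is automatically independent (and hence IMV)}. Indeed, if two vertices $u,v\in S$ were adjacent, any third vertex $w\in S$ would lie in one of the two components of $T-uv$; say in the one containing $u$. The unique $v,w$-geodesic in $T$ must then pass through $u$, which is internal to it and belongs to $S$, contradicting the MV property. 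A direct consequence is that in any MV coloring of $T$, every color class of size at least $3$ is automatically IMV, so the only classes that can fail to be IMV are $2$-element ``edge-classes'' $\{u,v\}$ with $uv\in E(T)$.

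With this lemma in hand, I plan to proceed by induction on $n=n(T)$. The base case $n=3$ is immediate, as the only tree is $P_3$ and both parameters equal $2$. For $n\geq 4$, I would pick a leaf $\ell$ of $T$ with neighbor $v$ and set $T'=T-\ell$; since $T'$ is convex in $T$, Lemma~\ref{lem:convex} together with the inductive hypothesis gives $\chi_{\mu_{i}}(T')=\chi_{\mu}(T')\leq \chi_{\mu}(T)=:k$. If $T'$ admits an IMV coloring using strictly fewer than $k$ colors, assigning $\ell$ a fresh color produces an IMV $k$-coloring of $T$ and we are done. Otherwise, I need to extend some IMV $k$-coloring $c'$ of $T'$ to $T$ without introducing any new color, i.e., to locate a color $j\neq c'(v)$ such that $S_j\cup\{\ell\}$ is IMV in $T$.

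This extension step is the main obstacle. Because $\ell$ is a leaf, independence of $S_j\cup\{\ell\}$ reduces to requiring $c'(v)\neq j$, while its MV property reduces to the condition that for every $y\in S_j$ the unique $v,y$-geodesic in $T$ contains no internal vertex belonging to $S_j$ --- a genuinely global requirement on the tree. I expect that guaranteeing the existence of such a $j$ will require either a judicious choice of the leaf $\ell$ (so that its neighbor $v$ is sufficiently ``exposed'' with respect to at least one class of the IMV coloring of $T'$), or a small local modification of $c'$ before extending it. The structural lemma is crucial throughout: once a receiving class reaches size $\geq 3$, its MV property automatically implies independence, so the verification collapses to the MV condition along $v$-to-$S_j$ geodesics, which can be controlled via the uniqueness of geodesics in a tree.
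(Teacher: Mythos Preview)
Your structural lemma is correct and is the right observation; the paper uses it implicitly when it deduces that a monochromatic edge forces its color class to consist of exactly its two endpoints. However, the inductive framework built on top of it has a genuine gap at precisely the point you flag as ``the main obstacle'': the extension step can fail for \emph{every} available color. Take $T=P_6=v_1\cdots v_6$, delete the leaf $\ell=v_6$ (so $v=v_5$), and consider the IMV $3$-coloring of $T'=P_5$ with classes $S_1=\{v_1,v_3\}$, $S_2=\{v_2,v_4\}$, $S_3=\{v_5\}$. Here $k=\chi_\mu(P_6)=3=\chi_{\mu_i}(P_5)$, so no fresh color is available. For $j=1$ the $v_6,v_1$-geodesic contains the internal vertex $v_3\in S_1$; for $j=2$ the $v_6,v_2$-geodesic contains $v_4\in S_2$. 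Neither extension works, and by symmetry the other leaf $v_1$ fails in the same way. Your inductive hypothesis delivers only the \emph{existence} of an IMV $k$-coloring of $T'$, not one with the extra property you would need, so the argument as written does not close.

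The paper sidesteps this by dropping induction for an extremal argument: among all MV $\chi_\mu(T)$-colorings, choose one minimizing the number of monochromatic edges, and show this number is zero. If a monochromatic edge $ab$ survives (say with color $1$), your lemma gives that color $1$ appears only on $\{a,b\}$; one then picks another color present in the subtree $W_{ab}$ and performs a local swap with color $1$ at $a$, strictly reducing the number of monochromatic edges and contradicting minimality. Your second suggested fix, ``a small local modification of $c'$'', is heading in exactly this direction --- but once you permit recoloring, the induction is no longer doing any work, and it is cleaner to run the extremal argument directly on $T$.
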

\begin{proof}
Let $T$ be a tree of order $n\ge 3$, and let $c:V(T)\rightarrow [\chi_\mu(T)]$ be a mutual visibility coloring of $T$ such that $|\{uv\in E(T)\mid c(u)=c(v)\}|$ is minimized. Suppose that there exist two adjacent vertices $a$ and $b$ in $T$ with $c(a)=c(b)$. We may assume without loss of generality that $c(a)=c(b)=1$. Consider the sets $W_{ab}=\{u\in V(T)\mid d_T(u,a)<d_T(u,b)\}$ and $W_{ba}=\{u\in V(T)\mid d_T(u,b)<d_T(u,a)\}$. 
 
Since every path between any vertex in $W_{ab}$ (resp. $W_{ba}$) and $b$ (resp. $a$) passes through $a$ (resp. $b$) and because $c(a)=c(b)=1$, it follows that there is no vertex $x$ in $V(T)\setminus\{a,b\}$ with $c(x)=1$. Thus, $a$ and $b$ are the only vertices in $T$ to which $c$ assigns color $1$.  
 
Since $n\ge 3$, at least one of the sets $W_{ab}$ or $W_{ba}$, say $W_{ab}$, is not a singleton set. We may assume without loss of generality that there exist vertices in $W_{ab}$ to which $c$ assigns color $2$. We distinguish two cases.\vspace{1mm}\\
{\it Case 1.} For every $y\in W_{ab}$ with $c(y)=2$, every vertex $z$, different from $y$, lying on the $y,a$-geodesic has $c(z)\ne 2$.\\ 
In this case, the assignment $c'(a)=2$, $c'(v)=1$ for each $v\in W_{ab}\cap \big{(}c^{-1}(2)\big{)}$ and $c'(v)=c(v)$ for any other vertex $v$ defines an MV coloring of $T$ with $|\{uv\in E(T)\mid c'(u)=c'(v)\}|<|\{uv\in E(T)\mid c(u)=c(v)\}|$, a contradiction to our choice of $c$.\vspace{1mm}\\ 
{\it Case 2.} There exist distinct vertices $y,z\in W_{ab}$ with $c(y)=c(z)=2$ such that $z$ lies on the $y,a$-geodesic.\\
Let $u$ be the neighbor of $z$ on the $z,y$-geodesic. If there exists $x\in c^{-1}(2)\cap W_{zu}$ different from $z$, then the unique $x,y$-geodesic contains $z$, and so $x$ and $y$ are not mutually-visible vertices in $c^{-1}(2)$, a contradiction. Hence $c^{-1}(2)\cap W_{zu}=\emptyset$, and the assignment $c'(a)=2$, $c'(z)=1$ and $c'(v)=c(v)$ for any other vertex $v$ defines an MV coloring of $T$ with $|\{uv\in E(T)\mid c'(u)=c'(v)\}|<|\{uv\in E(T)\mid c(u)=c(v)\}|$, again a contradiction to the choice of $c$.

Since in either case we obtained a mutual visibility coloring $c'$ of $T$ for which the number of edges $uv\in E(T)$ with $c'(u)=c'(v)$ is smaller than the number of edges with $c(u)=c(v)$, we infer a contradiction with the choice of $c$. Therefore, $c$ has no such edges, and so $c$ is an IMV coloring. This yields $\imv(T)=\chi_{\mu}(T)$.
\end{proof}


\subsection{Diameter $2$ graphs}
\label{sec:diam2}

There are many graphs that attain the lower bound $\imv(G)\ge \chi(G)$. Due to the remark before Proposition \ref{Pro}, we have the following result.

\begin{remark}\label{rem:diam-3}
If $G$ is a graph with diameter at most $3$, then $\imv(G)=\chi(G)$.    
\end{remark}

Next, we turn our attention to the MV coloring. The following upper bound was found in~\cite{KKVY}.

\begin{proposition}\emph{(\cite[Proposition 2.2]{KKVY})}\label{prp:boundchromatic}
If $G$ is a graph of diameter $2$, then $\chi_\mu(G)\le \chi(G)$.
\end{proposition}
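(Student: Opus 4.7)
The plan is to show that every proper coloring of a diameter-$2$ graph is automatically an MV coloring, which immediately yields $\chi_\mu(G)\le \chi(G)$.

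First, I would start from a proper coloring $c:V(G)\to[\chi(G)]$ and fix an arbitrary color class $S=c^{-1}(i)$. Since $c$ is proper, $S$ is an independent set of $G$. The goal reduces to verifying that $S$ is an MV set, i.e.\ that any two distinct vertices $u,v\in S$ admit a $u,v$-geodesic whose internal vertices avoid $S$.

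Next, I would exploit the diameter hypothesis. Because $uv\notin E(G)$ (as $S$ is independent) and $\mathrm{diam}(G)=2$, we have $d_G(u,v)=2$, so every $u,v$-geodesic has exactly one internal vertex, which is a common neighbor $w$ of $u$ and $v$. The key observation is that such a $w$ cannot lie in $S$: if $w\in S$, then the edge $uw$ would join two vertices of $S$, contradicting independence of $S$. Hence any common neighbor of $u$ and $v$ provides a $u,v$-geodesic whose unique internal vertex is outside $S$, so $u$ and $v$ are $S$-visible.

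Applying this to every color class of $c$ shows that $c$ is an MV $\chi(G)$-coloring, which gives $\chi_\mu(G)\le \chi(G)$. I do not foresee a real obstacle here; the only subtlety is the one-line observation that in a diameter-$2$ graph the independence of a set automatically forces the common neighbors of its members to lie outside the set, which is exactly what the MV-condition requires at distance $2$.
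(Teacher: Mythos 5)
Your proof is correct: independence of each color class forces $d_G(u,v)=2$ for two same-colored vertices, and any common neighbor must lie outside the class, so every proper coloring is an MV coloring. This is essentially the same argument the paper uses (the statement itself is only cited from~\cite{KKVY}, but the paper's proof of the stronger Proposition~\ref{prp:bound1defective} is the identical common-neighbor argument, with $1$-defectiveness in place of independence excluding the internal vertex).
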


It is easy to see that for the complete bipartite graphs the equality is achieved in the bound of Proposition~\ref{prp:boundchromatic}; notably $\chi_\mu(K_{r,s})=2= \chi(K_{r,s})$. It was asked in~\cite[Problem 8.3]{KKVY} whether there are any other graphs that achieve this bound with the ultimate goal to characterize the diameter 2 graphs satisfying $\chi_\mu(G)=\chi(G)$. We present some partial results towards this goal.

We improve the bound from Proposition~\ref{prp:boundchromatic} by considering the concept of defective coloring defined as follows. Given a graph $G$ and integers $k\ge 1$ and $d\ge 0$, a $(k,d)$-\textit{coloring} of $G$ is a $k$-coloring of the vertices of $G$ such that each vertex $x$ has at most $d$ neighbors with the same color as $x$.  In other words, each color class in a $(k,d)$-coloring of $G$ induces a subgraph with maximum degree $d$. In particular, a $(k,0)$-coloring is exactly a proper $k$-coloring. The concept was introduced in the 1980s~\cite{AJ,CCW}, and has been considered from various aspects in a number of papers (see two recent studies~\cite{BLM,Liu}). 

We are interested in {\em $1$-defective $k$-colorings}, which are $(k,1)$-colorings. The smallest $k$ such that $G$ admits a $(k,1)$-coloring is the {\em $1$-defective chromatic number of $G$}, denoted $\chi_1(G)$. Clearly, $\chi_1(G)\le \chi(G)$ for any graph $G$, so the following result is an improvement of Proposition~\ref{prp:boundchromatic}.

\begin{proposition}\label{prp:bound1defective}
If $G$ is a graph of diameter $2$, then $\chi_\mu(G)\le \chi_1(G)$.
\end{proposition}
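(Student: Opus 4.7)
The plan is to take an optimal $1$-defective coloring of $G$, using $\chi_1(G)$ colors, and to show that it is already an MV coloring; this immediately yields $\chi_{\mu}(G)\le \chi_1(G)$. The key leverage is that in a graph of diameter $2$, for any two distinct vertices $u,v$ every $u,v$-geodesic has at most one internal vertex, so the mutual-visibility condition on a color class reduces to finding, for each non-adjacent same-colored pair, some common neighbor that avoids the class.

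I would fix a $1$-defective coloring $c:V(G)\to[\chi_1(G)]$, pick an arbitrary color class $C=c^{-1}(i)$, and take two distinct vertices $u,v\in C$. When $d_G(u,v)=1$ the edge $uv$ itself is a geodesic with empty interior and there is nothing to check. So the whole argument concentrates on the case $d_G(u,v)=2$, where I must show that the set of common neighbors of $u$ and $v$ cannot be entirely contained in $C$.

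The heart of the argument is a short contradiction. Suppose some common neighbor $w$ of $u$ and $v$ lies in $C$. Then the two distinct vertices $u$ and $v$ are both neighbors of $w$ that also lie in $C$, so $w$ has at least two neighbors in its own color class; this violates the defining property of a $1$-defective coloring, under which every vertex has at most one same-colored neighbor. Consequently there is a common neighbor $w\notin C$, which yields a $u,v$-geodesic $uwv$ with internal vertex outside $C$, certifying $C$-visibility of $u$ and $v$.

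I do not expect a real obstacle, since the proof is essentially a single counting observation that exploits both hypotheses at once: diameter $2$ restricts the geodesics one has to inspect to length at most $2$, and the $1$-defective property precisely rules out the only potential obstruction, namely a unique common neighbor that happens to lie in $C$. The one subtlety worth highlighting is that the contradiction is extracted from counting the same-color neighbors of the intermediate vertex $w$ rather than those of $u$ or $v$ themselves.
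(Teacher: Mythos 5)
Your proposal is correct and follows essentially the same argument as the paper: take an optimal $1$-defective coloring, note that adjacent same-colored vertices are trivially visible via the edge itself, and for a non-adjacent pair use diameter $2$ to get a common neighbor, which the $1$-defective property forces out of the color class. The only cosmetic difference is that you phrase the key step as an explicit contradiction (counting same-colored neighbors of the intermediate vertex), whereas the paper states the same fact directly.
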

\begin{proof}
Let $G$ be a graph of diameter $2$, and let $c:V(G)\rightarrow [k]$ be a $(k,1)$-coloring of $G$ such that $k=\chi_1(G)$. Set $V_i=c^{-1}(i)$, for $i\in [k]$, as the color classes that arise from $c$, and consider arbitrary two vertices $x,y$ in $V_i$. If $xy\in E(G)$, then $x$ and $y$ are clearly $V_i$-visible since the unique $x,y$-geodesic does not have any internal vertices. On the other hand, if $xy\notin E(G)$, then since ${\rm diam}(G)=2$, there is a common neighbor $z$ of $x$ and $y$. Since $c$ is a $1$-defective coloring of $G$, $z$ belongs to $V_j$ for some $j\ne i$. Therefore, the $x,y$-geodesic $xzy$ has its only internal vertex outside of $V_i$. Therefore, $V_i$ is an MV set for every $i\in [k]$ and $c$ is an MV $k$-coloring. 
\end{proof}

In view of~\cite[Problem 8.3]{KKVY} searching for graphs with $\chi_\mu(G)=\chi(G)$, it is interesting to consider the graphs $G$ for which $\chi_\mu(G)=\chi_1(G)$ holds. (Since otherwise $\chi_\mu(G)<\chi_1(G)\le \chi(G)$.)

A graph $G$ is {\em geodetic} if every two vertices in $G$ are connected by a unique geodesic. Note that if ${\rm diam}(G)=2$, then $G$ is geodetic if and only if $G$ has neither $C_4$ nor $K_4-e$ as an induced subgraph. If $G$ is in turn triangle-free, then the bound in Proposition~\ref{prp:bound1defective} is attained. 

\begin{proposition}\label{prp:boundK3C4free}
If $G$ is a $\{C_3,C_4\}$-free graph of diameter $2$, then $\chi_\mu(G)=\chi_1(G)$.
\end{proposition}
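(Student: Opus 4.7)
The plan is to prove the matching lower bound $\chi_1(G) \le \chi_\mu(G)$, since the reverse inequality $\chi_\mu(G)\le \chi_1(G)$ is already given by Proposition \ref{prp:bound1defective}. To do this, I would fix any $\chi_\mu(G)$-coloring $c:V(G)\to [\chi_\mu(G)]$ with color classes $V_1,\dots,V_{\chi_\mu(G)}$ and aim to show that $c$ is automatically a $1$-defective coloring, i.e.\ each $V_i$ induces a subgraph of maximum degree at most $1$. This would immediately yield $\chi_1(G)\le \chi_\mu(G)$ and close the equality.

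The key step is a short contradiction argument. Suppose some vertex $v\in V_i$ has two neighbors $u_1,u_2$ in $V_i$. Since $G$ is $C_3$-free, the path $u_1 v u_2$ cannot be completed to a triangle, so $u_1u_2\notin E(G)$; combined with $\mathrm{diam}(G)=2$ this forces $d_G(u_1,u_2)=2$. Now I would use $C_4$-freeness to identify the $u_1,u_2$-geodesic: any common neighbor $w\neq v$ of $u_1$ and $u_2$ would produce the $4$-cycle $u_1 v u_2 w u_1$, contradicting the hypothesis. Hence $v$ is the unique common neighbor of $u_1$ and $u_2$, and $u_1 v u_2$ is the unique $u_1,u_2$-geodesic. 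Its only internal vertex $v$ lies in $V_i$, so $V_i$ violates the mutual-visibility property---a contradiction with $V_i$ being a color class of the MV coloring $c$.

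There is no real obstacle in this proof; the geodetic structure of $\{C_3,C_4\}$-free diameter $2$ graphs noted just before the statement does all the work. The only point worth emphasizing is that $C_4$-freeness is meant as a \emph{subgraph} (not induced-subgraph) condition in the paper's convention, which is precisely what allows one to rule out the cycle $u_1 v u_2 w u_1$ without worrying about possible chords.
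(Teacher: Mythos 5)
Your proposal is correct and follows essentially the same route as the paper: show that any $\chi_\mu(G)$-coloring is automatically $1$-defective, by noting that two same-colored neighbors $u_1,u_2$ of a same-colored vertex $v$ are nonadjacent (triangle-freeness) and have $u_1vu_2$ as their unique geodesic ($C_4$-freeness), contradicting mutual visibility; combined with Proposition~\ref{prp:bound1defective} this gives the equality. Your extra remark about the subgraph versus induced-subgraph convention is harmless but not needed, since triangle-freeness already forces any $4$-cycle through $u_1,v,u_2,w$ to be chordless.
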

\begin{proof}
Let $G$ be a $\{C_3,C_4\}$-free graph of diameter $2$. Let $c:V(G)\rightarrow [k]$, where $k=\chi_\mu(G)$ be an MV coloring of $G$. Suppose that $c$ is not a $1$-defective coloring of $G$. Then, there exists a color class $V_i=c^{-1}(i)$ such that $G[V_i]$ has a vertex $x$ with degree at least $2$. Let $\{u,v\}\subseteq N_G(x)\cap V_i$. Since $G$ is triangle-free, the path $P=uxv$ is a $u,v$-geodesic whose internal vertex $x$ belong to $V_i$. Since $G$ is $C_4$-free, there is no other $u,v$-geodesic apart from $P$, and we are in a contradiction to the fact that $c$ is an MV coloring of $G$. Thus, $c$ is a $1$-defective coloring of $G$, and so $\chi_1(G)\le \chi_\mu(G)$. By Proposition~\ref{prp:bound1defective}, we derive the desired equality.
\end{proof}

Triangle-free geodetic graphs with diameter $2$ form a small family, but have surprisingly appeared in many studies. Due to~\cite[Result II]{st-74}, they are precisely the Moore graphs with diameter 2 (and girth 5). There are only three known such graphs: $C_5$, the Petersen graph and the Hoffman-Singleton graph, which is a $7$-regular graph on $50$ vertices. The only other possible candidates are regular graphs with degree 57 on 3250 vertices, but it is a well-known open problem whether there are any such graphs. It is not hard to see that $\chi_\mu(C_5)=2<3=\chi(C_5)$, and the same holds for the Petersen graph $G$ with $\chi_\mu(G)=2<3=\chi(G)$. It is known that for the Hoffman-Singleton graph $H$, $\chi(H)=4$. Hence, $\chi_1(H)\le 4$, while its exact value would also yield the MV chromatic number of $H$ due to Proposition~\ref{prp:boundK3C4free}. 


\subsection{Lexicographic product graphs}

Several of our results (see Theorems \ref{th:subdiv-K_n}, \ref{th:subdiv-K_r-s}, \ref{thm:Cartesiantreescomplete} and Proposition \ref{cycle} as well as the forthcoming Theorems \ref{strong-paths} and \ref{th-mvc-strong-paths}) indicate that $\chi_{\mu}(G)$ and $\chi_{\mu_{i}}(G)$ are rather close to each other or even equal. In contrast, the results of this subsection show that they can be arbitrarily apart as well.

Clearly, $G\circ H\cong K_{n(G)n(H)}$ if and only if $G\cong K_{n(G)}$ and $H\cong K_{n(H)}$. In such a situation, $\chi_{\mu}(G\circ H)=1$. Moreover, $G\circ K_{1}\cong G$ and $K_{1}\circ H\cong H$. So, we may assume in the next theorem that at least one of $G$ and $H$ is not complete, and that both have at least two vertices.
We first recall, see \cite{HIK}, that the distance between any two vertices $(g,h),(g',h')\in V(G\circ H)$, where $G$ has no isolated vertices, is given by
\begin{equation}\label{formula}
d_{G\circ H}\big{(}(g,h),(g',h')\big{)}=\left \{
\begin{array}{lll}
\min\{2,d_{H}(h,h')\} & \mbox{if}\ g=g',\\
d_{G}(g,g') & \mbox{if}\ g\neq g'.
\end{array}
\right.
\end{equation}

\begin{theorem}\label{lex-mut-col}
Let $G$ be a connected graph and $H$ be a graph, both on at least two vertices. If at least one of them is not complete, then $\chi_{\mu}(G\circ H)=2$.
\end{theorem}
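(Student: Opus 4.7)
The plan is to establish the two inequalities $\chi_{\mu}(G\circ H)\ge 2$ and $\chi_{\mu}(G\circ H)\le 2$ separately. For the lower bound, I would argue that because at least one of $G$ or $H$ fails to be complete, the lexicographic product $G\circ H$ is not a complete graph either; hence there is at least one pair of non-adjacent vertices, which forces $V(G\circ H)$ itself not to be an MV set (every geodesic between a non-adjacent pair has at least one internal vertex, and that vertex necessarily lies in $V(G\circ H)$). Consequently $\chi_{\mu}(G\circ H)\ge 2$.

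For the upper bound, the idea is to exhibit an explicit MV partition with two parts. Since $|V(H)|\ge 2$, I would fix any partition $V(H)=A_1\cup A_2$ into two non-empty sets and set $V_i=V(G)\times A_i$ for $i\in\{1,2\}$. I would then verify that each $V_i$ is an MV set; by symmetry it suffices to handle $V_1$. The verification splits into two cases according to whether two vertices $(g,h),(g',h')\in V_1$ lie in the same $H$-fiber.

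In the case $g\ne g'$, formula \eqref{formula} gives $d_{G\circ H}\big{(}(g,h),(g',h')\big{)}=d_G(g,g')=k$. Taking any $g,g'$-geodesic $g=g_0,g_1,\ldots,g_k=g'$ in $G$ and any fixed element $h^{*}\in A_2$, I would lift this to the path $(g_0,h),(g_1,h^{*}),\ldots,(g_{k-1},h^{*}),(g_k,h')$ in $G\circ H$; the path has length $k$, hence is a geodesic, and all of its internal vertices lie in $V_2$, so they do not obstruct $V_1$-visibility. In the case $g=g'$, the vertices $(g,h)$ and $(g,h')$ are either equal, adjacent in $G\circ H$ (trivially $V_1$-visible), or at distance exactly $2$ by \eqref{formula}; in the last sub-case, picking any neighbor $g''$ of $g$ in $G$ (which exists because $G$ is connected with $|V(G)|\ge 2$) and any $h''\in A_2$ gives the geodesic $(g,h)(g'',h'')(g,h')$ whose single internal vertex lies in $V_2$.

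The only step with anything substantive to verify is the in-fiber sub-case where $g=g'$ and $(g,h),(g,h')$ are non-adjacent, and even there the key observation is immediate: in a lexicographic product, two distinct vertices of the same $H$-fiber are at distance at most $2$, and that distance can always be realized by routing through a neighboring fiber whose second coordinate is chosen freely from $A_2$. Combining the lower and upper bounds will then yield $\chi_{\mu}(G\circ H)=2$.
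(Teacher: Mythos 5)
Your proof is correct and takes essentially the same approach as the paper: the paper's optimal $2$-coloring is exactly the special case of your construction in which one part of the bipartition of $V(H)$ is a singleton $\{h^{*}\}$ (one $G$-fiber versus the rest), and both verifications proceed identically, lifting a $g,g'$-geodesic of $G$ with second coordinates chosen in the opposite class and handling same-fiber pairs via a neighboring fiber. The lower bound via non-completeness of $G\circ H$ is likewise the paper's argument.
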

\begin{proof}
Let $h$ and $h^{*}$ be distinct vertices of $H$. Setting $\overline{G^{h^*}}=V(G\circ H)\setminus G^{h^*}$,  
we show that $\mathcal{Q}=\{G^{h^*},\overline{G^{h^*}}\}$ is an MV coloring of $G\circ H$. Let $(g,h^{*}),(g',h^{*})\in G^{h^{*}}$ be distinct. Let $gg_{1}\ldots g_{k}g'$ be a $g,g'$-geodesic in $G$. Using the distance formula (\ref{formula}) it follows that $(g,h^{*})(g_{1},h)\ldots(g_{k},h)(g',h^{*})$ is a $(g,h^{*}),(g',h^{*})$-geodesic in $G\circ H$ such that $\{(g_{1},h),\ldots,(g_{k},h)\}\cap G^{h^{*}}=\emptyset$. Therefore, $G^{h^{*}}$ is an MV set in $G\circ H$.

Now, let $(g,h)$ and $(g',h')$ be distinct vertices in $\overline{G^{h^*}}$ and we may assume that they are not adjacent. Assume first that $g=g'$. If $gg^{*}\in E(G)$, then $(g,h)(g^{*},h^{*})(g,h')$ is a $(g,h),(g,h')$-geodesic in $G\circ H$ whose internal vertex is not contained in $\overline{G^{h^*}}$. So, we may assume that $g\neq g'$. By taking any $g,g'$-geodesic $gg_{1}\ldots g_{k}g'$ in $G$, we get the $(g,h),(g',h')$-geodesic $(g,h)(g_{1},h^{*})\ldots(g_{k},h^{*})(g',h')$ whose internal vertices do not belong to $\overline{G^{h^*}}$. Hence, $\overline{G^{h^*}}$ is an MV set in $G\circ H$ as well. In either case, we have proved that $\mathcal{Q}$ is an MV coloring of $G\circ H$. Thus, $\chi_{\mu}(G\circ H)\leq|\mathcal{Q}|=2$. This results in $\chi_{\mu}(G\circ H)=2$ as $G\circ H$ is not a complete graph.
\end{proof}

The following lemma will be useful to determine the IMV (chromatic) number of lexicographic product graphs.

\begin{lemma}\label{lex-indep}
If $G\ncong K_1$ is a connected graph and $H$ a graph with at least one edge, then every independent set in $G\circ H$ is an IMV set in $G\circ H$.
\end{lemma}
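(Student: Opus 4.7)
The plan is to take an arbitrary independent set $S \subseteq V(G\circ H)$ and two distinct vertices $(g,h),(g',h')\in S$, and produce a $(g,h),(g',h')$-geodesic in $G\circ H$ whose internal vertices miss $S$. The main tool is the distance formula (\ref{formula}), together with the observation that, since $H$ has an edge, no $H$-fiber $^{g_0}\!H$ can be contained in an independent set (any copy of $H$ in $G\circ H$ inherits the edge, so its vertex set is not independent).

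First I would dispose of \textbf{Case 1: $g=g'$}. Since $(g,h)$ and $(g,h')$ lie in $S$ and $S$ is independent, they are non-adjacent, which by the product's edge rule forces $hh'\notin E(H)$, so by (\ref{formula}) their distance in $G\circ H$ is $2$. Using the hypothesis $G\ncong K_1$ together with connectedness, I choose a neighbor $g^{*}$ of $g$ in $G$; then $(g,h)(g^{*},h)(g,h')$ is a geodesic, and its internal vertex $(g^{*},h)$ is adjacent in $G\circ H$ to both endpoints (since $gg^{*}\in E(G)$), so by independence of $S$ it cannot lie in $S$.

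For \textbf{Case 2: $g\ne g'$}, formula (\ref{formula}) gives $d_{G\circ H}\big((g,h),(g',h')\big)=d_G(g,g')=:k$. Independence of $S$ forces $k\ge 2$ (otherwise $(g,h)(g',h')$ would be an edge). Fix a $g,g'$-geodesic $g=g_0g_1\cdots g_k=g'$ in $G$; for any choice of second coordinates $x_1,\dots,x_{k-1}$, the lifted path $(g,h)(g_1,x_1)\cdots(g_{k-1},x_{k-1})(g',h')$ is a geodesic in $G\circ H$ by (\ref{formula}). I select the $x_i$'s so that no internal vertex lies in $S$: for $i=1$ and $i=k-1$, any choice works because $(g_1,x_1)$ and $(g_{k-1},x_{k-1})$ are respectively adjacent to $(g,h)\in S$ and $(g',h')\in S$ via $g_0g_1,g_{k-1}g_k\in E(G)$; for $2\le i\le k-2$, the fiber $^{g_i}\!H$ contains an edge (being an induced copy of $H$), hence cannot lie entirely inside the independent set $S$, so some $x_i\in V(H)$ with $(g_i,x_i)\notin S$ exists.

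The argument is essentially routine once the distance formula and the fiber observation are in hand; I do not foresee a serious obstacle. The only subtlety worth flagging is the need for both hypotheses: without $G\ncong K_1$, Case~1 fails to produce the auxiliary vertex $g^*$; and without an edge in $H$, Case~2 breaks for the interior indices $2\le i\le k-2$ (and also the entire statement fails, as the vertices of any fiber form a large independent non-IMV set when $H$ is edgeless).
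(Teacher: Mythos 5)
Your proposal is correct and follows essentially the same route as the paper's proof: split into the cases $g=g'$ and $g\neq g'$, lift a $g,g'$-geodesic of $G$ via the distance formula~(\ref{formula}), and use independence (adjacency to the endpoints, plus the fact that an $H$-fiber contains an edge and thus cannot lie entirely in the independent set) to choose second coordinates avoiding the set. The only cosmetic difference is that the paper fixes one edge $xy\in E(H)$ and uses it uniformly for all internal fibers, whereas you treat the first and last internal vertices via adjacency to the endpoints; both are sound.
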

\begin{proof}
Let $I$ be an independent set in $G\circ H$. Let $(g,h),(g',h')\in I$ be distinct. Assume first that $g=g'$. Since $G\ncong K_1$ is connected, $gg^{*}\in E(G)$ for some $g^{*}\in V(G)$. On the other hand, because $I$ is an independent set in $G\circ H$, it follows that $(g^{*},h^{*})\notin I$ for each $h^{*}\in V(H)$. This argument results in the existence of the $(g,h),(g,h')$-geodesic $(g,h)(g^{*},h^{*})(g,h')$ for any $h^{*}\in V(H)$.

Assume now that $g\neq g'$ and let $xy\in E(H)$. Let $gg_{1}\ldots g_{k}g'$ be a $g,g'$-geodesic in $G$. Since $I$ is an independent set in $G\circ H$, at most one of $(g_i,x)$ and $(g_i,y)$ belongs to $I$ for every $i\in [k]$. Let $h^*_i$, with $i\in [k]$, be such that $(g_i,h_i^*)\notin I$. So, the internal vertices of the $(g,h),(g',h')$-geodesic $(g,h)(g_{1},h^{*}_{1})\ldots(g_{k},h^{*}_{k})(g',h')$ do not belong to $I$. Indeed, we have proved that every independent set in $G\circ H$ is an MV set as well. 
\end{proof}

\begin{theorem}\label{Equ-color}
If $G\ncong K_1$ is a connected graph and $H$ is a graph with at least one edge, then the following statements hold.\vspace{1mm}\\
$(i)$ $\chi_{\mu_{i}}(G\circ H)=\chi(G\circ H)$, and\vspace{1mm}\\
$(ii)$ $\mu_{i}(G\circ H)=\alpha(G)\alpha(H)$. 
\end{theorem}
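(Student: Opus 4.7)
The plan is to observe that, thanks to Lemma \ref{lex-indep}, in the relevant lexicographic products the classes of independent sets and of IMV sets coincide, which essentially collapses both statements (i) and (ii) to the corresponding independence-type versions that are already well understood.

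For part (i), I would first recall from \eqref{eq:first-ineq} that $\chi_{\mu_i}(G\circ H)\geq \chi(G\circ H)$ holds in full generality. For the reverse inequality, I would take an arbitrary proper $\chi(G\circ H)$-coloring $c$ of $G\circ H$. By definition of proper coloring, each color class $c^{-1}(j)$ is an independent set in $G\circ H$. Since $G\ncong K_1$ is connected and $H$ has at least one edge, the hypotheses of Lemma \ref{lex-indep} are satisfied, so each $c^{-1}(j)$ is automatically an IMV set in $G\circ H$. Consequently, $c$ is an IMV coloring, giving $\chi_{\mu_i}(G\circ H)\leq \chi(G\circ H)$.

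For part (ii), I would use the standard identity $\alpha(G\circ H)=\alpha(G)\alpha(H)$ for the lexicographic product. For the upper bound, since every IMV set is independent by definition, $\mu_i(G\circ H)\leq \alpha(G\circ H)=\alpha(G)\alpha(H)$. For the lower bound, I would fix an $\alpha(G)$-set $A\subseteq V(G)$ and an $\alpha(H)$-set $B\subseteq V(H)$ and consider $S=A\times B\subseteq V(G\circ H)$. A direct check using the adjacency rule of $G\circ H$ shows that $S$ is independent: any putative edge in $S$ would require either an edge of $G$ inside $A$ or an edge of $H$ inside $B$. Hence $|S|=\alpha(G)\alpha(H)$, and by Lemma \ref{lex-indep} the set $S$ is also an IMV set in $G\circ H$. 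Therefore $\mu_i(G\circ H)\geq \alpha(G)\alpha(H)$, and the equality follows.

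There is essentially no hard step: the work was already done in Lemma \ref{lex-indep}, whose hypotheses exactly match the conditions of the theorem ($G$ connected, non-trivial, and $H$ containing an edge, so that enough ``non-$S$'' companion vertices exist in each fiber to route geodesics around $S$). The only mild care needed is to make sure the classical identity $\alpha(G\circ H)=\alpha(G)\alpha(H)$ is invoked correctly (or proved on the spot by the same adjacency check used for the lower bound), and to note that the assumptions ``$G\ncong K_1$ connected'' and ``$E(H)\neq\emptyset$'' are precisely the ones under which Lemma \ref{lex-indep} applies, so no additional case analysis is required.
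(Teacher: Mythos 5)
Your proposal is correct and follows essentially the same route as the paper: both reduce everything to Lemma~\ref{lex-indep}, obtaining (i) by upgrading the color classes of a proper $\chi(G\circ H)$-coloring to IMV sets and using \eqref{eq:first-ineq} for the lower bound, and (ii) from $\mu_i(G\circ H)\le\alpha(G\circ H)=\alpha(G)\alpha(H)$ together with the lemma applied to a maximum independent set. Your explicit verification that $A\times B$ is independent is a slightly more detailed spelling-out of what the paper leaves implicit, but the argument is the same.
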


\begin{proof}
$(i)$ The inequality $\chi_{\mu_{i}}(G\circ H)\geq \chi(G\circ H)$ follows by~\eqref{eq:first-ineq}. Conversely, every independent set in $G\circ H$ is an MV set by Lemma \ref{lex-indep}. We infer that every color class of a $\chi(G\circ H)$-coloring is also an MV set and $\chi_{\mu_{i}}(G\circ H)\leq \chi(G\circ H)$ holds as well.

$(ii)$ This is an immediate consequence of Lemma \ref{lex-indep} by taking into account the fact that $\alpha(G\circ H)=\alpha(G)\alpha(H)$. 
\end{proof}


\section{Cartesian and strong products}\label{Cart-Str}

\subsection{Cartesian product}

Let $G$ and $H$ be any (connected) graphs, and $G\square H$ be their Cartesian product. Note that any $G$-fiber (resp.\ $H$-fiber) in $G\square H$ is isomorphic to $G$ (resp.\ $H$) and is also a convex subgraph of $G\square H$. Thus, using Lemma~\ref{lem:convex}, we infer the following result.

\begin{proposition}\label{prop:Cartesianlower}
If $G$ and $H$ are connected graphs, then $\chi_{\mu}(G\square H)\ge \max\{\chi_{\mu}(G),\chi_{\mu}(H)\}$, and the bound is sharp.
\end{proposition}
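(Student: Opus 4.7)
The plan is to obtain the inequality as an immediate corollary of Lemma \ref{lem:convex}, using the familiar fact that every fiber of a Cartesian product is a convex subgraph isomorphic to the corresponding factor. Concretely, for each $h \in V(H)$ the set $G^{h} = \{(g,h) : g \in V(G)\}$ induces a subgraph isomorphic to $G$, and by the distance formula $d_{G\square H}((g,h),(g',h')) = d_{G}(g,g') + d_{H}(h,h')$ any geodesic whose endpoints share the $H$-coordinate is forced to keep that coordinate fixed throughout; hence $G^{h}$ is convex in $G\square H$. Symmetrically, each $H$-fiber $^{g}\!H$ is a convex subgraph isomorphic to $H$.

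With these convex subgraphs identified, I would apply Lemma \ref{lem:convex} first to $G^{h}$ for any fixed $h$, yielding $\chi_{\mu}(G\square H) \ge \chi_{\mu}(G^{h}) = \chi_{\mu}(G)$, and then to $^{g}\!H$ for any fixed $g$, yielding $\chi_{\mu}(G\square H) \ge \chi_{\mu}(H)$. Taking the maximum of the two lower bounds gives the stated inequality.

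For the sharpness claim, I would exhibit an explicit small example. A non-degenerate choice is the $3\times 2$ grid $P_{3}\square P_{2}$: direct verification gives $\chi_{\mu}(P_{3})=2$ and $\chi_{\mu}(P_{2})=1$, and the partition of the six vertices into $\{(v_{1},w_{1}),(v_{3},w_{1}),(v_{2},w_{2})\}$ and $\{(v_{2},w_{1}),(v_{1},w_{2}),(v_{3},w_{2})\}$ is easily checked to be an MV coloring (each pair within a class is connected by a length-$2$ geodesic whose internal vertex lies in the opposite class), so that $\chi_{\mu}(P_{3}\square P_{2})=2=\max\{\chi_{\mu}(P_{3}),\chi_{\mu}(P_{2})\}$. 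An even cheaper witness is $H\cong K_{1}$, giving $G\square K_{1}\cong G$ and hence equality for every connected $G$.

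There is no genuine obstacle in the argument: the inequality reduces at once to Lemma \ref{lem:convex} after the convexity of fibers is recalled, and the only judgment call is which sharpness example to display.
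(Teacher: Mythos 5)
Your argument is correct and is essentially the paper's own proof: both derive the inequality by observing that $G$-fibers and $H$-fibers of $G\square H$ are convex subgraphs isomorphic to the factors and then invoking Lemma~\ref{lem:convex}. The only difference is the sharpness witness (you use $P_3\square P_2$, which checks out, while the paper uses $K_{r,s}\square K_n$), which is immaterial.
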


To see the sharpness of the bound in the proposition above, one can take $G=K_{r,s}$ and $H=K_n$, where max$\{r,s\}\geq2$ and $n\geq1$. Consider the partition of $V(K_{r,s})\times V(K_n)$ as follows: ${\cal S}=\{A\times V(K_n),B\times V(K_n)\}$, where $A$ and $B$ are partite sets of $K_{r,s}$ with $|A|=r$ and $|B|=s$. Note that $\cal S$ yields a mutual visibility $2$-coloring of $G\square H$ showing that $\chi_\mu(K_{r,s}\square K_n)=2=\max\{\chi_{\mu}(K_{r,s}),\chi_{\mu}(K_n)\}$.

In the following result we establish a connection between the MV coloring of the generalized Cartesian prisms of a graph $G$ (that is, $G\Box K_n$) with the IMV coloring of $G$. 

\begin{theorem}\label{prop:Cartesianupper}
If $G$ is a connected graph and $n$ a positive integer, then $\chi_{\mu}(G\square K_n)\le \imv(G)$, and the bound is sharp.
\end{theorem}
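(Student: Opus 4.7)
The plan is to use an optimal IMV coloring of $G$ and lift it along the $K_n$-fibers to obtain an MV coloring of $G\square K_n$ with the same number of colors. Concretely, I would fix an IMV $k$-coloring $c:V(G)\to [k]$ of $G$ with $k=\imv(G)$ and define $\tilde c:V(G\square K_n)\to [k]$ by $\tilde c((g,i))=c(g)$. The color classes of $\tilde c$ are the sets $C_j=c^{-1}(j)\times V(K_n)$, and the task reduces to verifying that each $C_j$ is an MV set in $G\square K_n$.

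To verify this I would fix $j\in[k]$ and two distinct vertices $(g,i),(g',i')\in C_j$. If $g=g'$, the two vertices are adjacent in the $K_n$-fiber and are $C_j$-visible through the edge itself. Otherwise $g\neq g'$, and independence of $c^{-1}(j)$ guarantees $gg'\notin E(G)$, hence $d_G(g,g')\geq 2$. Since $c$ is an IMV coloring, there is a $g,g'$-geodesic $P=g\,u_1\ldots u_m\,g'$ in $G$ with $m\geq 1$ and $c(u_s)\neq j$ for every $s\in[m]$. I would then lift $P$ differently according to whether $i=i'$ or $i\neq i'$: in the first case I would use the path $(g,i)(u_1,i)\ldots(u_m,i)(g',i)$ entirely inside the $G$-fiber at level $i$; in the second case I would insert the single $K_n$-step after the first $G$-step, producing the path $(g,i)(u_1,i)(u_1,i')(u_2,i')\ldots(u_m,i')(g',i')$. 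A direct length count shows that both lifts are geodesics in $G\square K_n$ and that all their internal vertices avoid $C_j$ because $c(u_s)\neq j$.

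The main obstacle is precisely the subcase $g\neq g'$ with $i\neq i'$: the naive lifts that do all $G$-moves first and then the $K_n$-move (or vice versa) pass through either $(g',i)$ or $(g,i')$, both of which lie in $C_j$. To avoid this, one must place the $K_n$-switch at an interior vertex of $P$, which is possible only because $m\geq 1$. That inequality comes from the independence of the color classes of $c$, so this is exactly the step where IMV is essential and where an ordinary MV coloring of $G$ would not be enough; this explains why the bound features $\imv(G)$ rather than $\chi_\mu(G)$.

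For sharpness, I would combine the upper bound with the lower bound $\chi_\mu(G\square K_n)\geq \chi_\mu(G)$ from Proposition \ref{prop:Cartesianlower}, applied to graphs satisfying $\chi_\mu(G)=\imv(G)$. By Theorem \ref{thm:Cartesiantreescomplete}, every tree $T$ on at least three vertices enjoys this equality, so taking $G=T$ gives $\imv(T)\geq\chi_\mu(T\square K_n)\geq\chi_\mu(T)=\imv(T)$, demonstrating that the inequality $\chi_\mu(G\square K_n)\leq\imv(G)$ is tight.
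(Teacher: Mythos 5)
Your proposal is correct and follows essentially the same argument as the paper: lift an optimal IMV coloring along the $K_n$-fibers, handle the $g=g'$ case by adjacency, and for $g\neq g'$ use independence to get an internal vertex of the geodesic at which to insert the single $K_n$-step (the paper places that step at the last internal vertex rather than the first, an immaterial difference). The sharpness argument via trees and Theorem~\ref{thm:Cartesiantreescomplete} also matches the paper's subsequent corollary.
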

\begin{proof}
Let ${\cal S}=\{S_1,\ldots, S_k\}$ be a partition of $G$ into color classes of an IMV $k$-coloring of $G$ with $k=\imv(G).$
It suffices to verify that the partition ${\cal S}'=\{S_1\times V(K_n),\ldots, S_k\times V(K_n)\}$ yields an MV coloring of $G\square K_n$. To do so, for any $i\in[k]$, let $(g,h),(g',h')\in S_{i}\times V(K_{n})$ be distinct vertices. If $g=g'$, then $(g,h)(g',h')\in E(G\square K_{n})$. We may thus assume that $g\neq g'$. Since $S_{i}$ is simultaneously an MV set and an independent set in $G$, it follows that there exists a $g,g'$-geodesic $gg_{1}\ldots g_{r}g'$ in $G$ such that $r\geq1$ and that $g_{j}\notin S_{i}$ for every $j\in[r]$. If $h=h'$, then $(g,h)(g_{1},h)\ldots(g_{r},h)(g',h')$ is a $(g,h),(g',h')$-geodesic in $G\square K_{n}$ having no internal vertices from $S_{i}\times V(K_{n})$. On the other hand, if $h\neq h'$, then $P=(g,h)(g_{1},h)\ldots(g_{r},h)(g_{r},h')(g',h')$ is a $(g,h),(g',h')$-geodesic in $G\square K_{n}$ due to the fact that $d_{G\square K_{n}}\big{(}(g,h),(g',h')\big{)}=d_{G}(g,g')+1=r+2$. Moreover, no internal vertices of $P$ belong to $S_{i}\times V(K_{n})$. All in all, we have verified that the partition ${\cal S}'$ yields an MV coloring of $G\square K_{n}$, as desired. 
\end{proof}

Combining Proposition~\ref{prop:Cartesianlower} and Theorems~\ref{prop:Cartesianupper} and~\ref{thm:Cartesiantreescomplete}, we infer the following exact value.

\begin{corollary}
If $T$ is a tree on at least $3$ vertices, then $\chi_{\mu}(T\Box K_n)=\chi_{\mu}(T)$ holds for any positive integer $n$.
\end{corollary}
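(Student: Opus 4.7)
The plan is to sandwich $\chi_{\mu}(T\Box K_n)$ between matching lower and upper bounds, both of which are already available from the three results just mentioned.

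For the lower bound, I would apply Proposition~\ref{prop:Cartesianlower} with $G=T$ and $H=K_n$, yielding
\[
\chi_{\mu}(T\Box K_n)\ \ge\ \max\{\chi_{\mu}(T),\chi_{\mu}(K_n)\}\ \ge\ \chi_{\mu}(T).
\]
No case analysis is needed here; the tree $T$ is connected and so is $K_n$, so the hypothesis of Proposition~\ref{prop:Cartesianlower} is satisfied without issue.

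For the matching upper bound, I would invoke Theorem~\ref{prop:Cartesianupper}, which gives $\chi_{\mu}(T\Box K_n)\le \chi_{\mu_{i}}(T)$, and then use Theorem~\ref{thm:Cartesiantreescomplete} (which requires $T$ to have at least three vertices, exactly the hypothesis we are assuming) to replace $\chi_{\mu_{i}}(T)$ by $\chi_{\mu}(T)$. Concatenating,
\[
\chi_{\mu}(T\Box K_n)\ \le\ \chi_{\mu_{i}}(T)\ =\ \chi_{\mu}(T).
\]
The two displayed inequalities together force equality.

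There is no real obstacle: both ingredients are already proved and the argument is simply a composition. The only subtlety to flag is the necessity of the hypothesis $n(T)\ge 3$, which is needed for Theorem~\ref{thm:Cartesiantreescomplete} (the case $T\cong K_2$ behaves differently since $\chi_{\mu}(K_2)=1$ while $\chi_{\mu_{i}}(K_2)=2$, so the upper-bound step would fail without this assumption).
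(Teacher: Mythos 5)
Your argument is correct and is exactly the paper's intended proof: the corollary is stated as an immediate combination of Proposition~\ref{prop:Cartesianlower} (lower bound), Theorem~\ref{prop:Cartesianupper} (upper bound via $\chi_{\mu_i}(T)$), and Theorem~\ref{thm:Cartesiantreescomplete} (to identify $\chi_{\mu_i}(T)$ with $\chi_{\mu}(T)$). Your additional remark on why $n(T)\ge 3$ is needed is a correct and sensible clarification.
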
 

We end this subsection with the following statement. 
\begin{proposition}
\label{prp:Hamming}
If $r,s,t\ge 2$ are integers, then $\imv(K_r\square K_s\square K_t)=\max\{r,s,t\}$.    
\end{proposition}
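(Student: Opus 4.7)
The plan is to sandwich $\imv(K_r\square K_s\square K_t)$ between $\max\{r,s,t\}$ and $\max\{r,s,t\}$, with the lower bound coming from \eqref{eq:first-ineq} and the upper bound coming from the fact that this Hamming graph has small diameter, so every proper coloring is automatically an IMV coloring.

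For the lower bound, I would observe that each fiber of the Cartesian product is a complete graph, so in particular $K_r\square K_s\square K_t$ contains a clique of size $\max\{r,s,t\}$, giving $\chi(K_r\square K_s\square K_t)\ge \max\{r,s,t\}$. Then \eqref{eq:first-ineq} yields
\[
\imv(K_r\square K_s\square K_t)\ge \chi(K_r\square K_s\square K_t)\ge \max\{r,s,t\}.
\]

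For the upper bound, I would first note that the distance between $(i,j,k)$ and $(i',j',k')$ in $K_r\square K_s\square K_t$ equals the number of coordinates in which they differ (since each factor has diameter $1$ and distances in Cartesian products are sums of coordinate distances). Hence $\mathrm{diam}(K_r\square K_s\square K_t)=3$. By the parenthetical strengthening of Proposition~\ref{Pro} (which states that in any graph of diameter at most $3$, every independent set is already an MV set), \emph{every} proper coloring of $K_r\square K_s\square K_t$ is an IMV coloring. Therefore $\imv(K_r\square K_s\square K_t)\le \chi(K_r\square K_s\square K_t)$. Finally, writing $m=\max\{r,s,t\}$ and using the vertex set $[r]\times[s]\times[t]$ with $[n]=\{0,1,\ldots,n-1\}$, the map $c(i,j,k)=(i+j+k)\bmod m$ is a proper $m$-coloring: any two adjacent vertices differ in exactly one coordinate by a nonzero amount whose absolute value is at most $m-1$, and so their colors differ. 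Hence $\chi(K_r\square K_s\square K_t)\le m$, which combined with the above yields the matching upper bound.

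There is no real obstacle here; the only place where care is required is in quoting the ``independent sets are MV sets'' strengthening of Proposition~\ref{Pro}, which is exactly what collapses the IMV chromatic number to the ordinary chromatic number for graphs of diameter at most $3$. This observation makes the rest of the argument a routine verification that $\chi(K_r\square K_s\square K_t)=\max\{r,s,t\}$.
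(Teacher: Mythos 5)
Your proposal is correct and follows essentially the same route as the paper: the paper's proof simply invokes Remark~\ref{rem:diam-3} (which rests on the same ``diameter at most $3$ implies independent sets are MV sets'' fact you cite) together with the known value $\chi(K_r\square K_s\square K_t)=\max\{r,s,t\}$. You merely supply the routine details the paper leaves implicit, namely the diameter computation and the explicit proper $\max\{r,s,t\}$-coloring $c(i,j,k)=(i+j+k)\bmod m$, both of which check out.
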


\begin{proof}
Since $K_r\square K_s \square K_t$ has diameter at most $3$ and $\chi(K_r\square K_s\square K_t)=\max\{r,s,t\}$, the conclusion follows from Remark~\ref{rem:diam-3}.
\end{proof}

\subsection{Strong product}

Recall, see \cite{HIK}, that the distance between two vertices $(g,h),(g',h')\in V(G\boxtimes H)$ is given by
\begin{equation}\label{Dis-Str}
d_{G\boxtimes H}\big{(}(g,h),(g',h')\big{)}=\max \{d_{G}(g,g'),d_{H}(h,h')\}.
\end{equation}

We illustrate an application of Lemma \ref{lem:convex} also in the case of strong product of graphs. Note that convex subgraphs are completely described in strong product graphs (\cite{Pete}). However, for our purpose, we restrict our attention to two special cases of convex subgraphs in $G\boxtimes H$. 

Let $g_1\dots g_k$ and $h_1\dots h_k$ be convex paths of $G$ and $H$, respectively. The subgraph of $G\boxtimes H$ induced by $\{(g_1,h_1),\dots,(g_k,h_k)\}$ is a \emph{diagonal}. It is not hard to see (and it follows from~\cite[Theorem 3.8]{Pete}) that diagonals are convex. In order to obtain a diagonal of maximum length, we need to take a convex path of maximum length in each factor. By ${\rm cp}(G)$ we denote the length of a longest convex path in $G$ and ${\rm diag}(G\boxtimes H)$ is the length of a longest diagonal in $G\boxtimes H$. In fact, ${\rm diag}(G\boxtimes H)=\min\{{\rm cp}(G),{\rm cp}(H)\}$. These concepts enable us to prove a lower bound for $\imv(G\boxtimes H)$. Recall that $\omega(G)$ denotes the cardinality of a maximum clique in $G$ and that an edge $e$ in $E(G\square H)$ \big{(}resp. $E(G\boxtimes H)\setminus E(G\square H)$\big{)} is called a~\textit{Cartesian edge} (resp.~\textit{non-Cartesian edge}) in $G\boxtimes H$. 

\begin{theorem}\label{strong} 
If $G$ and $H$ are connected graphs, then 
\begin{center}
$\imv(G\boxtimes H)\geq \max\left\{\left\lceil \dfrac{{\rm diag}(G\boxtimes H)}{2}\right\rceil\!,\,\omega(G)\omega(H)\right\}$.
\end{center}
\end{theorem}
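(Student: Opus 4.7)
The desired lower bound is a maximum of two independent quantities, so I would establish each separately.

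For the clique bound $\imv(G\boxtimes H)\ge \omega(G)\omega(H)$, the argument is immediate from \eqref{eq:first-ineq}: we have $\imv(G\boxtimes H)\ge \chi(G\boxtimes H)\ge \omega(G\boxtimes H)$, and the well-known identity $\omega(G\boxtimes H)=\omega(G)\omega(H)$ finishes the job. The identity itself is an easy two-way inequality: $K_G\times K_H$ is a clique in $G\boxtimes H$ whenever $K_G\subseteq V(G)$ and $K_H\subseteq V(H)$ are cliques, while conversely any clique $C$ in $G\boxtimes H$ projects onto cliques (possibly singletons) in the two factors, so $|C|\le \omega(G)\omega(H)$.

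For the diagonal bound, I would fix a longest diagonal $D$ in $G\boxtimes H$, arising from convex paths $g_1\ldots g_k$ in $G$ and $h_1\ldots h_k$ in $H$ with $k-1={\rm diag}(G\boxtimes H)$. Using \eqref{Dis-Str} together with the isometry of the factor paths, $d_{G\boxtimes H}\bigl((g_i,h_i),(g_j,h_j)\bigr)=\max\{d_G(g_i,g_j),d_H(h_i,h_j)\}=|i-j|$, so $D$ is isomorphic to $P_k$. Moreover, $D$ is convex in $G\boxtimes H$: along any geodesic between two vertices of $D$, the coordinate projections must be geodesics of the right length in each factor, and the convexity of the factor paths forces those projections to be the canonical subpaths, so the geodesic stays on $D$. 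This is the fact already invoked in the paragraph preceding the theorem (and recorded in~\cite{Pete}). Lemma~\ref{lem:convex} then gives $\imv(G\boxtimes H)\ge \imv(D)=\imv(P_k)$.

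It remains to show $\imv(P_k)\ge \lceil k/2\rceil\ge \lceil(k-1)/2\rceil=\lceil{\rm diag}(G\boxtimes H)/2\rceil$. The key observation is that $\mu_i(P_k)\le 2$: three pairwise non-adjacent vertices $v_i,v_j,v_\ell$ of $P_k$ with $i<j<\ell$ already fail to form an IMV set, because the unique $v_i,v_\ell$-geodesic passes through $v_j$. Substituting into \eqref{eq:first-ineq} yields $\imv(P_k)\ge \lceil k/\mu_i(P_k)\rceil\ge \lceil k/2\rceil$, as required (the degenerate cases $k\le 2$, i.e.\ ${\rm diag}(G\boxtimes H)\le 1$, are trivial).

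The main subtlety lies in setting up the diagonal cleanly---that is, verifying both $D\cong P_k$ and that $D$ is convex in $G\boxtimes H$, so that Lemma~\ref{lem:convex} actually applies. Once that is in place, each of the two bounds reduces to a standard fact: the multiplicativity of the clique number in strong products on the one side, and the tiny independent mutual-visibility number of a path on the other.
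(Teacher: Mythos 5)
Your proposal is correct and follows essentially the same route as the paper: the clique bound via $\imv\ge\chi\ge\omega$ together with $\omega(G\boxtimes H)=\omega(G)\omega(H)$, and the diagonal bound via convexity of a longest diagonal, Lemma~\ref{lem:convex}, and the value of $\imv$ on a path. The only difference is that you spell out the details the paper cites or leaves implicit (that $D\cong P_k$ is convex, and that $\mu_i(P_k)\le 2$ forces $\imv(P_k)\ge\lceil k/2\rceil$), which is a welcome but not substantively different elaboration.
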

\begin{proof}
Firstly, we have
$$\imv(G\boxtimes H)\geq\chi(G\boxtimes H)\geq \omega(G\boxtimes H)=\omega(G)\omega(H).$$

On the other hand, let $D$ be a diagonal of $G\boxtimes H$ with $|D|={\rm diag}(G\boxtimes H)$. Since $D\cong P_{|D|}$ is a convex subgraph of $G\boxtimes H$, it follows by Lemma \ref{lem:convex} that $\imv(G\boxtimes H)\geq \imv(P_{|D|})=\lceil|D|/2\rceil=\lceil{\rm diag}(G\boxtimes H)/2\rceil$.       
\end{proof}

To see that the bound of Theorem~\ref{strong} is sharp and that both lower bounds can be attained, observe 
the following result on paths. 

\begin{theorem}\label{strong-paths}
If $t\geq 4k\geq 8$, then $\imv(P_t\boxtimes P_{4k})=\frac{{\rm diag}(P_t\boxtimes P_{4k})}{2}=2k$ and $\imv(P_t\boxtimes P_r)=4$ for $t\geq r\in \{3,4,5,6,7\}$.
\end{theorem}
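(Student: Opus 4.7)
The plan is to establish each equality by matching lower and upper bounds. Both lower bounds are immediate from Theorem~\ref{strong}: since every path $P_n$ is its own longest convex path, ${\rm diag}(P_t\boxtimes P_{4k}) = \min\{t,4k\}=4k$ and ${\rm diag}(P_t\boxtimes P_r)=r\le 7$, while $\omega(P_t)\omega(P_{4k})=\omega(P_t)\omega(P_r)=4$. Combined with $k\ge 2$, this yields $\imv(P_t\boxtimes P_{4k})\ge 2k$ and $\imv(P_t\boxtimes P_r)\ge 4$.

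For the upper bounds I would exhibit a single family of row-shifted colorings $c(i,j) = ((j-1+s_i)\bmod m)+1$, with $s_i=0$ for odd $i$ and $s_i=2$ for even $i$, taking $m=2k$ for the first equality and $m=4$ for the second. Independence is a routine $\bmod m$ check, since the color differences of neighbors lie in $\{\pm 1,\pm 2,\pm 3\}\bmod m$, all nonzero for $m\ge 4$.

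The heart of the argument is the verification of mutual visibility by case analysis on a pair of same-color vertices. On any main diagonal the color increments alternate $+3,-1\bmod m$, giving full period $m$; hence each color appears at most twice per diagonal, spaced $m$ steps apart, and the unique diagonal geodesic between two same-color vertices has interior cycling through the remaining $m-1$ colors. An analogous computation with increments $+1,-3$ handles every anti-diagonal. For same-row pairs (at distance exactly $m$) the row geodesic directly cycles through all other colors. For same-column pairs, whose column geodesics fail because the color repeats in every other row of the same parity, I would use the zigzag $(i_1,j),(i_1+1,j'),(i_1+2,j'),\ldots,(i_2-1,j'),(i_2,j)$ through an adjacent column $j'\in\{j-1,j+1\}$: the two alternating colors in that column are $c_0\pm 1$ and $c_0\pm 1+2\bmod m$, both distinct from the given color $c_0$ when $m\ge 4$. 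Mixed-type pairs (different column and parity) are then resolved by composite diagonal/horizontal geodesics whose two turning points are chosen to miss the special columns of the given color.

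The main obstacle is ensuring that the zigzag and composite geodesics work uniformly in $t$. The key observation is that outside at most four ``special'' columns $j\in\{1,2k+1,2k-1,4k-1\}$ the given color does not appear at all, so arbitrarily long zigzag and composite paths can be built freely, independently of how large $t$ is. Once every case is verified, the coloring is an IMV $2k$- (respectively $4$-) coloring, matching the lower bound and yielding the theorem.
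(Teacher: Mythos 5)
Your overall strategy is the same as the paper's: the lower bounds come from Theorem~\ref{strong} exactly as you state, and the upper bounds come from an explicit proper coloring in which every color class is confined to exactly four columns (two met only in odd rows, two only in even rows), with mutual visibility verified by routing geodesics through the ``safe'' columns. Your explicit coloring differs from the paper's (a shift-by-$2$ periodic pattern rather than the paper's reflected pattern anchored on the main diagonal), but it has the same structural fingerprint, and your treatment of the pure cases — common diagonal (unique geodesic, color period $m$ along it), same row (distance exactly $m$), same column (zigzag through an adjacent, necessarily safe, column) — is correct and closely parallels the paper's.

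The soft spot is the one-sentence dismissal of the ``mixed'' pairs, which is precisely where the paper spends most of its effort (the explicit paths $P$, $P'$ and the case split over the four columns $C(1),\dots,C(4)$). This case is not automatic: when $|j_2-j_1|>|i_2-i_1|$ every geodesic must advance the column at every step, so it is forced through every intermediate special column, and one must show the row parities there can be arranged simultaneously. For example, with $m=4$ and color $1$, take $(1,1)$ and $(8,7)$: the geodesics are parametrized by the single step at which the column pauses, and of the seven candidates only the one pausing at the fourth step avoids both $(5,5)$ and $(4,3)$, which carry color $1$. So ``turning points chosen to miss the special columns'' conceals a genuine parity argument (special columns of the two types are at least two apart, which is what makes the parities independently adjustable, and the net row displacement must still be realizable within the path length). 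The claim does appear to hold for your coloring, but this verification is the actual content of the upper bound and needs to be written out; as a smaller remark, for the second assertion you could avoid redoing it for $r\in\{3,\dots,7\}$ by restricting the $k=2$ coloring to the convex subgraph $P_t\boxtimes P_r$ and invoking Lemma~\ref{lem:convex}.
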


\begin{proof}
By Theorem \ref{strong}, we have $\imv(P_t\boxtimes P_{4k})\geq {\rm diag}(P_{t}\boxtimes P_{4k})/2=2k$. For the reverse inequality, we first define $c:V(P_{4k}\boxtimes P_{4k})\rightarrow [2k]$ as follows. Consider $V(P_{4k}\boxtimes P_{4k})=\{(i,j)\mid i,j\in[4k]\}$ in a matrix form and let $C_j=\{(i,j)\mid i\in[4k]\}$ be the set of vertices of the $j$th column. For each $\ell\in [k]$, there are exactly four columns $C_{j}$ with $j\equiv \ell$ (mod $k$), namely $C_{\ell},C_{k+\ell},C_{2k+\ell},C_{3k+\ell}$. Now, for each of these columns, we color their vertices by using the colors $2\ell-1$ and $2\ell$ as follows. Consider the diagonal $D=\{(j,j)\mid j\in [4k]\}$. For every $\ell\in[k]$, the vertices in $D\cap C_{\ell}$ and $D\cap C_{3k+\ell}$ get the color $2\ell-1$ and the vertices in $D\cap C_{k+\ell}$ and $D\cap C_{2k+\ell}$ the color $2\ell$.  The rest of the coloring in columns $C_{pk+\ell}$, with $p\in\{0,1,2,3\}$, is now fixed due to the facts that $c$ is a proper coloring and that we only use the colors $2\ell-1$ and $2\ell$ in each of these columns. In particular, notice that any color is present in exactly four columns. See the described colorings for $P_{12}\boxtimes P_{12}$ and $P_8\boxtimes P_8$ in the following schemes.     
\begin{equation*}
\begin{array}{cccccccccccccccccccc}
1 & 4 & 5 & 1 & 4 & 5 & 2 & 3 & 6 & 2 & 3 & 6	& & & & & & & &	\\ 
2 & 3 & 6 & 2 & 3 & 6 & 1 & 4 & 5 & 1 & 4 & 5	& & & & & & & &	\\ 
1 & 4 & 5 & 1 & 4 & 5 & 2 & 3 & 6 & 2 & 3 & 6   &\ \ \ \    1 & 4 & 2 & 3 & 2 & 3 & 1 & 4\\ 
2 & 3 & 6 & 2 & 3 & 6 & 1 & 4 & 5 & 1 & 4 & 5	&\ \ \ \	2 & 3 & 1 & 4 & 1 & 4 & 2 & 3\\ 
1 & 4 & 5 & 1 & 4 & 5 & 2 & 3 & 6 & 2 & 3 & 6	&\ \ \ \	1 & 4 & 2 & 3 & 2 & 3 & 1 & 4\\ 
2 & 3 & 6 & 2 & 3 & 6 & 1 & 4 & 5 & 1 & 4 & 5	&\ \ \ \	2 & 3 & 1 & 4 & 1 & 4 & 2 & 3\\
1 & 4 & 5 & 1 & 4 & 5 & 2 & 3 & 6 & 2 & 3 & 6	&\ \ \ \	1 & 4 & 2 & 3 & 2 & 3 & 1 & 4\\ 
2 & 3 & 6 & 2 & 3 & 6 & 1 & 4 & 5 & 1 & 4 & 5	&\ \ \ \	2 & 3 & 1 & 4 & 1 & 4 & 2 & 3\\ 
1 & 4 & 5 & 1 & 4 & 5 & 2 & 3 & 6 & 2 & 3 & 6	&\ \ \ \	1 & 4 & 2 & 3 & 2 & 3 & 1 & 4\\ 
2 & 3 & 6 & 2 & 3 & 6 & 1 & 4 & 5 & 1 & 4 & 5	&\ \ \ \	2 & 3 & 1 & 4 & 1 & 4 & 2 & 3\\ 
1 & 4 & 5 & 1 & 4 & 5 & 2 & 3 & 6 & 2 & 3 & 6 & & & & & & & &\\ 
2 & 3 & 6 & 2 & 3 & 6 & 1 & 4 & 5 & 1 & 4 & 5 & & & & & & & &%
\end{array}%
\end{equation*}

For $P_t\boxtimes P_{4k}$ with $t>4k$, we continue with the described pattern in every column. The resulting coloring is a proper one, and we only need to show that it is an MV coloring as well. To do so, we will make use of some specific paths to verify the mutual-visibility property. Assume that $c(i,j)=c(p,r)$, where $i\leq p$ without loss of generality. There are exactly four columns which contain the color $c(i,j)$, and let them be ordered as $C(1)$, $C(2)$, $C(3)$ and $C(4)$. We will describe different paths that assure the mutual-visibility property according to which of the four columns $(i,j)$ and $(p,r)$ belong. For this, we will use a $(i,j),(p,r)$-geodesic $P$ (resp. $P'$) that starts at $(i,j)$ (resp. at $(p,r)$) with non-Cartesian edges and ends with the maximum number of Cartesian edges.

First, let $j=r$, which means that both vertices are in the same column, and $i<p$. If $j\neq 4k$, then the path $(i,j)(i+1,j+1)(i+2,j+1)\dots(p-1,j+1)(p,r)$ has no internal vertices of color $c(i,j)$. If $j=4k$, then a desired path is $(i,j)(i+1,j-1)(i+2,j-1)\dots(p-1,j-1)(p,r)$. 

Now, assume $(i,j)$ and $(p,r)$ belong to the columns $C(\ell)$ and $C(\ell+1)$, where $\ell\in[3]$. If $|j-r|\geq p-i$, then $P$ is the desired path. Otherwise, a path $(i,j)(i+1,j+1)\dots(i+r-j-1,r-1)(i+r-j,r-1)\dots(p-1,r-1)(p,r)$ when $j<r$ and a path $(i,j)(i+1,j-1)\dots(i+r-j+1,r+1)(i+r-j+2,r+1)\dots(p-1,r+1)(p,r)$ when $j>r$ are geodesics without internal vertices of color $c(i,j)$.

We continue with the case where $(i,j)$ and $(p,r)$ belong to columns $C(\ell)$ and $C(\ell+2)$ where $\ell\in[2]$. If $(i,j)$ belongs to $C(1)$ or $C(4)$ and $|j-r|\geq p-i$, then $P$ is without internal vertices of color $c(i,j)$. Similarly, if $(i,j)$ belongs to $C(2)$ or $C(3)$ and $|j-r|\geq p-i$, then $P'$ is without internal vertices of color $c(i,j)$. So, let $|j-r|<p-i$. If $(i,j)$ belongs to $C(1)$, then a desired path is $(i,j)\xrightarrow{P}(i+r-j-1,r-1)(i+r-j,r-1)\dots (p-1,r-1)(p,r)$ and if $(i,j)$ belongs to $C(4)$, then a desired path is $(i,j)\xrightarrow{P}(i+r-j+1,r+1)(i+r-j,r+1)\dots (p-1,r+1)(p,r)$. If $(i,j)$ belongs to $C(2)$, then a desired path is $(p,r)\xrightarrow{P'}(p+j-r+1,j+1)(i+j-r,j+1)\dots (i+1,j+1)(i,j)$ and if $(i,j)$ belongs to $C(3)$, then a desired path is $(p,r)\xrightarrow{P'}(p+j-r-1,j-1)(p+j-r,j-1)\dots (i+1,j-1)(i,j)$. 

Finally, let $(i,j)$ and $(p,r)$ belong to columns $C(1)$ and $C(4)$. If $|r-j|\leq p-i$, then we are done with a path $(i,j)\xrightarrow{P}(i+r-j-1,r-1)(i+r-j,r-1)\dots(p-1,r-1)(p,r)$ when $(i,j)\in C(1)$ and with a path $(i,j)\xrightarrow{P}(i+r-j+1,r+1)(i+r-j+2,r+1)\dots(p-1,r+1)(p,r)$ when $(i,j)\in C(4)$. So, let $|r-j|>p-i$. Now $P$ contains at most one vertex of color $c(i,j)$ and this vertex (if exists) is incident to two Cartesian edges of $P$. Say that $(s,t)$ is the internal vertex of $P$ with $c(s,t)=c(i,j)$. If we replace $(s,t)$ on $P$ with $(s-1,t)$, then we obtain a $(i,j)(p,r)$-geodesic without an internal vertex of color $c(i,j)$, which ends the first equality. 

To end the proof, let $t\geq r\in\{3,4,5,6,7\}$. By Theorem \ref{strong}, we have $\imv(P_t\boxtimes P_r)\geq 4$ as $\omega (P_t)=\omega (P_r)=2$. It is easy to obtain an IMV coloring of $P_r\boxtimes P_r$ by deleting the last $8-r$ rows and columns from the $(P_8\boxtimes P_8)$-coloring given in the previous scheme. Now, one can continue with the pattern in every column to obtain an IMV coloring of $P_t\boxtimes P_r$ with four colors. Hence, $\imv(P_t\boxtimes P_r)=4$. 
\end{proof}

In contrast to the above result, we next give the exact formula for $\mvc(P_t\boxtimes P_r)$. It is known that $\mvc(P_{n})=\left\lceil\frac{n}{2}\right\rceil$ for each $n\geq1$ (see \cite{KKVY}). With this in mind, we assume that min$\{t,r\}\geq2$.

\begin{theorem}\label{th-mvc-strong-paths}
Let $t$ and $r$ be two integers with min$\{t,r\}\geq2$. Then,
\begin{center}
$\mvc(P_t\boxtimes P_r)=\left\{\begin{array}{ll}
1; & \mbox{if $t=r=2$}, \\[0.15cm]
2; & \mbox{if $r\neq t$ and min$\{t,r\}=2$}, \\[0.15cm]
min\big{\{}\left\lceil\frac{t}{2}\right\rceil,\left\lceil\frac{r}{2}\right\rceil\big{\}}; & \mbox{otherwise}.
\end{array}\right.$
\end{center}
\end{theorem}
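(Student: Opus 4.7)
The plan is to handle the three cases separately. The case $t=r=2$ is trivial since $P_2\boxtimes P_2\cong K_4$ is complete and its vertex set is an MV set, giving $\mvc=1$. For the case $\min\{t,r\}=2$ with $r\neq t$, say $r=2$ and $t\geq 3$, the graph is not complete (e.g.\ $(1,1)$ and $(3,1)$ are at distance $2$), so $\mvc\geq 2$; matching this, the partition into the two rows $R_j=\{(i,j):i\in[t]\}$ for $j\in\{1,2\}$ is an MV coloring, since any two vertices $(i_1,j),(i_2,j)$ with $i_1<i_2$ admit the geodesic $(i_1,j),(i_1+1,3-j),(i_1+2,3-j),\ldots,(i_2-1,3-j),(i_2,j)$ whose internal vertices lie in $R_{3-j}$.

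For the generic case $\min\{t,r\}\geq 3$, I assume without loss of generality that $r\leq t$ and aim to prove $\mvc(P_t\boxtimes P_r)=\lceil r/2\rceil$. The lower bound follows from Lemma~\ref{lem:convex} applied to the diagonal induced by $\{(i,i):i\in[r]\}$, which is a convex subgraph isomorphic to $P_r$; combined with the known value $\mvc(P_r)=\lceil r/2\rceil$, this yields $\mvc(P_t\boxtimes P_r)\geq\lceil r/2\rceil$.

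For the matching upper bound, set $k=\lceil r/2\rceil\geq 2$ and color each vertex by $c(i,j)=((j-1)\bmod k)+1$, so that color class $\ell\in[k]$ consists of column $\ell$ together with column $\ell+k$ whenever $\ell+k\leq r$. To verify this is an MV coloring, consider two vertices $(i_1,j_1),(i_2,j_2)$ in class $\ell$, so $\{j_1,j_2\}\subseteq\{\ell,\ell+k\}$. If $j_1=j_2=j$, a zig-zag geodesic that steps immediately into a neighboring column $j'\in\{j-1,j+1\}\cap[1,r]$ (which lies outside the class because $k\geq 2$) and then runs along $j'$ before stepping back to $j$ provides the visibility. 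If $j_1=\ell\neq\ell+k=j_2$ and $|i_1-i_2|\leq k$, any monotone-diagonal geodesic from $(i_1,\ell)$ to $(i_2,\ell+k)$ of length $k$ has internal vertices in columns $\ell+1,\ldots,\ell+k-1$, disjoint from the class. For $|i_1-i_2|>k$, say $i_1<i_2$, one concatenates the diagonal $(i_1,\ell),(i_1+1,\ell+1),\ldots,(i_1+k-1,\ell+k-1)$ with the horizontal run along column $\ell+k-1$ from $(i_1+k-1,\ell+k-1)$ to $(i_2-1,\ell+k-1)$ and a final diagonal step to $(i_2,\ell+k)$; this is a geodesic of length $i_2-i_1$ all of whose internal vertices sit in columns $\ell+1,\ldots,\ell+k-1$, disjoint from $\{\ell,\ell+k\}$.

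The main technical obstacle is the last sub-case, which must navigate past both ``forbidden'' columns $\ell$ and $\ell+k$ over many steps; this succeeds precisely because $k\geq 2$ ensures that the buffer column $\ell+k-1$ is distinct from both class columns. The boundary situations---$\ell=1$, $\ell+k=r$, or $\ell+k>r$ making the class a singleton column---are handled by choosing whichever side-column inside $[1,r]$ is available, which always exists because $r\geq 3$ in this case.
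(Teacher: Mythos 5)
Your proof is correct and follows essentially the same route as the paper: the lower bound comes from the convexity of the diagonal $\{(i,i):i\in[r]\}$ together with Lemma~\ref{lem:convex} and $\mvc(P_r)=\lceil r/2\rceil$, and the upper bound from a coloring whose classes are pairs of far-apart columns. The only cosmetic differences are that you treat the $r=2<t$ case by a direct row-partition instead of invoking $P_t\boxtimes P_2\cong P_t\circ K_2$ and Theorem~\ref{lex-mut-col}, and your uniform formula $c(i,j)=((j-1)\bmod \lceil r/2\rceil)+1$ pairs columns at distance $\lceil r/2\rceil$ for odd $r$ where the paper reuses the even case on the first $r-1$ columns and gives $C_r$ its own color; your explicit geodesic verification is, if anything, more detailed than the paper's.
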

\begin{proof}
If $r=t=2$, then clearly $P_t\boxtimes P_r\cong K_4$, and the conclusion is trivial. From now on, by symmetry, we may assume that $t\geq r$. If $r=2$ and $t\ge 3$, then $P_t\boxtimes P_2\cong P_t\circ K_2$ and the result follows by Theorem \ref{lex-mut-col}. Due to this, we may assume that $r\geq3$.

By using similar arguments as the ones used in the proof of Theorem \ref{strong}, if $D$ is a diagonal of $P_t\boxtimes P_r$ with $|D|={\rm diag}(P_t\boxtimes P_r)$, then there can be at most two vertices of the same color on $D$, for any MV coloring of $P_t\boxtimes P_r$, because $(P_t\boxtimes P_r)[D]\cong P_{|D|}$ is a convex subgraph of $P_t\boxtimes P_r$. Thus, $\mvc(P_t\boxtimes P_r)\geq \left\lceil \frac{{\rm diag}(P_t\boxtimes P_r)}{2}\right\rceil=\left\lceil\frac{r}{2}\right\rceil$ follows from Lemma \ref{lem:convex}. 

Recall that by $C_{j}$, with $j\in[r]$, we mean the set of vertices in the $j$th column of the matrix form of $P_t\boxtimes P_r$. Assume first that $r=3$. Then, the assignment $1$ and $2$ to the vertices in $C_{1}\cup C_{3}$ and $C_{2}$, respectively, gives an MV coloring of $P_t\boxtimes P_3$. Now, let $r\geq4$.
If $r$ is even, we give the color $i$ to all vertices in $C_{i}\cup C_{i+\frac{r}{2}}$ for each $i\in\big{[}\frac{r}{2}\big{]}$. Due to the structure of $P_t\boxtimes P_r$ and (\ref{Dis-Str}), we infer that $c$ is an MV coloring of $P_t\boxtimes P_r$ with $\frac{r}{2}$ colors. Finally, if $r$ is odd, we first consider the subgraph $P_t\boxtimes P_{r-1}$. Note that the above process leads to an MV coloring $c$ of $P_t\boxtimes P_{r-1}$ with $\frac{r-1}{2}$ colors. It is now easy to see that $c'(v)=c(v)$ for each $v\in V(P_t\boxtimes P_{r-1})$, and $c'(v)=\frac{r+1}{2}$ for each $v\in C_{r}$ defines an MV coloring of $P_t\boxtimes P_r$ with $\frac{r+1}{2}$ colors. So, $\mvc(P_t\boxtimes P_r)\leq \left\lceil\frac{r}{2}\right\rceil$ in either case, which completes the proof.
\end{proof}

\section{Concluding remarks}

This paper is, in a sense, a continuation of the paper of Klav\v zar et al.~\cite{KKVY} in which mutual-visibility coloring was introduced and studied. In Section~\ref{sec:diam2}, we improved an upper bound from~\cite{KKVY} on $\mvc(G)$ for diameter $2$ graphs $G$ by involving $1$-defective chromatic number. The authors of the earlier paper suggested in~\cite[Problem 8.1]{KKVY} to investigate the computational complexity of computing the mutual-visibility chromatic number, which we also implicitly addressed. In Theorem~\ref{Complexity2} we proved that the decision problem arising from the independent version of mutual-visibility coloring is NP-complete (even in graphs with universal vertices). We strongly suspect that the same holds for the basic version of the mutual-visibility coloring. 

\begin{conjecture}
{\sc Mutual-Visibility Coloring} is NP-complete.
\end{conjecture}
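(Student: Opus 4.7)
Membership in NP is immediate: for a given vertex partition with at most $r$ parts, each color class can be tested for mutual-visibility in polynomial time using the $O(|P|(n+m))$ algorithm of~\cite{DiS}. For NP-hardness, the natural first attempt---mimicking Theorem~\ref{Complexity2} by reducing from {\sc Graph Colorability} via the corona $K_1\odot G$---fails outright: the universal vertex provides a distance-$2$ geodesic between every pair of non-adjacent vertices of $G$, so $V(G)$ is automatically an MV set of $K_1\odot G$, forcing $\chi_\mu(K_1\odot G)\le 2$ regardless of $G$. So a successful reduction must encode coloring constraints into the \emph{visibility} structure, not merely into adjacency.

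My proposal is to reduce from {\sc Graph Colorability} by building, from an instance $(G,k)$, a graph $G^*$ with $\chi_\mu(G^*)=\chi(G)+c$ for some fixed constant $c$. The idea is to attach, to each edge $uv\in E(G)$, a private ``witness gadget'' $H_{uv}$ containing two designated vertices $x_{uv},y_{uv}$ such that (i)~the \emph{unique} geodesic from $x_{uv}$ to $y_{uv}$ in $G^*$ is forced to pass through both $u$ and $v$, and (ii)~every MV coloring of $G^*$ using at most $\chi(G)+c$ colors must place $x_{uv}$ and $y_{uv}$ in a common color class. If (i) and (ii) both hold, then assigning $u$ and $v$ a common color would destroy the $\{x_{uv},y_{uv}\}$-visibility, so $u$ and $v$ must get distinct colors, and the induced coloring on $V(G)$ is a proper one. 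Natural candidates for $H_{uv}$ are variants of subdivisions of complete graphs---whose MV chromatic number is already nontrivial by Theorem~\ref{th:subdiv-K_n}---equipped with paths long enough that the forced geodesic between $x_{uv}$ and $y_{uv}$ traverses $u$ and $v$. The forward direction (proper $k$-coloring of $G$ $\Rightarrow$ MV $(k+c)$-coloring of $G^*$) should then be routine by coloring each gadget using $c$ dedicated colors.

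The main obstacle is to enforce the uniqueness-of-geodesic condition (i) in a setting where $G$ itself may contribute many competing short paths; this is precisely where the absence of an independence requirement in MV sets bites, because the usual blow-up trick---replace a vertex by a clique---fails, as a clique is an MV set on its own. I expect the right construction to insulate $G$ with long paths so that the gadget-geodesics become uniquely shortest, and to combine this with a pigeonhole argument on the color budget to lock in (ii). In effect, the MV analogue of the IMV reduction must simulate independence via visibility-destroying configurations, and the delicate step will be to verify that the resulting MV chromatic number is exactly $\chi(G)+c$ and not accidentally smaller because of some unexpected MV set traversing several gadgets at once.
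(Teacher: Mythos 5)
First, note that the paper does not prove this statement at all: it is stated as a conjecture, supported only by circumstantial evidence (the NP-completeness of the independent version in Theorem~\ref{Complexity2} and the Ramsey-number connection via Theorem~\ref{th:subdiv-K_n} and Observation~\ref{obs:Ramsey}). So there is no proof in the paper to compare against, and your proposal must stand on its own. It does not: beyond the NP-membership part (which is fine), what you give is a reduction \emph{strategy} with the key gadget $H_{uv}$ never constructed and conditions (i) and (ii) never established. Condition (ii) in particular is a strong global claim about \emph{every} MV coloring within a color budget, and no mechanism for enforcing it is proposed; the paper's only lower-bound tools for $\chi_\mu$ (convex subgraphs, Lemma~\ref{lem:convex}, and $n/\mu$) do not obviously yield it.

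More seriously, the logical core of the reduction is flawed even granting (i) and (ii). If $x_{uv}$ and $y_{uv}$ lie in a common class $C$ and their unique geodesic passes through $u$ and $v$, the only conclusion is that $u,v\notin C$, i.e.\ $c(u)\ne c(x_{uv})$ and $c(v)\ne c(x_{uv})$. It does \emph{not} follow that $c(u)\ne c(v)$: if $u$ and $v$ share a color different from that of $x_{uv}$, the $\{x_{uv},y_{uv}\}$-visibility is untouched, and the visibility of $u$ and $v$ themselves is automatic because they are adjacent in $G^*$ (an edge is a geodesic with no internal vertices --- this is exactly why $\chi_\mu(K_n)=1$, and exactly the obstruction that the independence requirement removes in the IMV setting of Theorem~\ref{Complexity2}). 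So the induced coloring on $V(G)$ need not be proper, and the backward direction of the reduction collapses. Any successful encoding of {\sc Graph Colorability} into $\chi_\mu$ must prevent adjacent (or designated) vertices from sharing a class by destroying visibility \emph{between them}, e.g.\ by separating them at distance at least $2$ with all their geodesics forced through same-class blockers, rather than by constraining which colors they may receive; your sketch, as written, does not do this, so the conjecture remains open.
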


Another argument that implicitly supports the conjecture arises from Theorem~\ref{th:subdiv-K_n} and Observation~\ref{obs:Ramsey}, in which we establish that $\mvc\big{(}S(K_n)\big{)}\in\{\rho(n),\rho(n)+1\}$, and $\rho(n)$ is in close relation with some Ramsey numbers, which are well known to be difficult. Concerning Theorem~\ref{th:subdiv-K_n}, we actually think that $\mvc\big{(}S(K_n)\big{)}=\rho(n)+1$ is likely to be true, while for the independent version of the coloring we even dare to conjecture it:

\begin{conjecture}
For any positive integer $n$, $\imv\big{(}S(K_n)\big{)}=\rho(n)+1$. 
\end{conjecture}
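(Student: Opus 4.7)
The plan is to establish the lower bound $\imv\big{(}S(K_n)\big{)} \geq \rho(n) + 1$, since Theorem~\ref{th:subdiv-K_n} already provides the matching upper bound. Suppose for contradiction that $S(K_n)$ admits an IMV $t$-coloring with $t = \rho(n)$. Adapting the analysis from the proof of Theorem~\ref{th:subdiv-K_n}, the subdivision vertices inherit a partition $\{Q'_1,\dots,Q'_t\}$ of $E(K_n)$ in which each class is $K_4$-free; this is forced because two opposite subdivision vertices of a monochromatic $K_4$ would have all four of their length-$4$ geodesics blocked by internal subdivision vertices of the same color. The independence of each color class additionally requires that whenever an original vertex $v \in V(K_n)$ is assigned color $k$, no edge of $Q'_k$ is incident to $v$; we encode this by calling $c(v):=k$ the \emph{missing color} of $v$. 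One checks that all remaining mutual-visibility requirements (between a subdivision vertex and an original vertex, or between two original vertices) then follow automatically from independence and from the $K_4$-freeness of the $Q'_k$'s.

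Next, I would partition $U := V(K_n) = V_1 \sqcup \cdots \sqcup V_t$ via $V_k = c^{-1}(k)$ and read off the following structural constraints: edges in $K_n[V_k]$ use colors only from $[t]\setminus\{k\}$, while edges between $V_k$ and $V_\ell$ (for $k \neq \ell$) use colors only from $[t]\setminus\{k,\ell\}$. In particular, the clique $K_n[V_k]$ is $K_4$-freely $(t-1)$-edge-colored, so by the definition of $\rho$ one has $|V_k| < R(4^{t-1})$ for every $k$; simultaneously, $\rho(n) = t$ yields $n \geq R(4^{t-1})$. The base case $t = 2$ then follows quickly: no color is available for cross-edges between $V_1$ and $V_2$, so one of them equals $U$ and all edges share a single color, giving a monochromatic $K_n$ which contains $K_4$ for $n \geq 4$.

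The main obstacle is the analogous argument for $t \geq 3$, where the $V_k$'s may be distributed freely and the bipartite palettes $[t]\setminus\{k,\ell\}$ still allow rich structures. Two routes seem plausible. One is an induction on $t$: lift the $(t-1)$-coloring inside some $V_k$ to an IMV coloring of $S(K_{|V_k|})$ (or of a closely related auxiliary graph) and invoke the induction hypothesis, making crucial use of the cross-edge constraints to propagate a contradiction. The other is a vertex-local Ramsey argument: a fixed vertex $v\in V_k$ sees its $n-1$ incident edges in only $t-1$ colors, so by pigeonhole some color $m$ at $v$ has at least $\lceil (n-1)/(t-1)\rceil$ endpoints, and a $K_3$ of color $m$ inside this neighborhood would together with $v$ yield a forbidden monochromatic $K_4$; iterating this bound with the definition of $\rho$ should force $\sum_k|V_k|<n$, a contradiction. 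Making either route work for all $n$ with $\rho(n)=t$ is the crux of the conjecture and appears to be essentially as delicate as understanding the Ramsey numbers $R(4^t)$ themselves.
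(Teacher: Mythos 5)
The statement you are addressing is stated in the paper only as a conjecture; the paper offers no proof of it, and your proposal does not supply one either. Your reduction step is sound and matches the machinery of Theorem~\ref{th:subdiv-K_n}: in any IMV $t$-coloring of $S(K_n)$ the subdivision vertices induce a $K_4$-free partition of $E(K_n)$ (two subdivision vertices of disjoint edges are at distance $4$, and a monochromatic $K_4$ blocks all four middle subdivision vertices), independence forces every original vertex to avoid the colors of its incident edges, and the remaining visibility requirements do follow automatically (any internal vertex on the relevant short geodesics is adjacent to one of the two endpoints, so independence excludes it from the class). Your base case $t=2$ is also correct: cross-edges between two nonempty vertex classes would need a third color, so all original vertices share one color, the edge coloring is monochromatic, and a $K_4$ appears for $n\ge 4$.

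The genuine gap is everything after that, and you name it yourself: for $t=\rho(n)\ge 3$ neither of your two routes is carried out, and the structural constraints you extract do not by themselves yield a contradiction. From $\rho(n)=t$ you get $n\ge R(4^{t-1})$, and from the internal colorings you get $|V_k|<R(4^{t-1})$ for each class, but pigeonhole only guarantees some $|V_k|\ge n/t$, which is perfectly compatible with both bounds; likewise, in the vertex-local argument a large monochromatic star at $v$ does not force a monochromatic triangle among its leaves, so no $K_4$ is produced without further Ramsey-type input. Whether the single ``absorbing'' class of original vertices can ever save a color (i.e., whether $\imv\big(S(K_n)\big)=\rho(n)$ can occur) is precisely the open content of the conjecture, so what you have is a correct reformulation and a verified smallest case, not a proof.
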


Results from Section \ref{sec:subdivision} suggest making a more complete study of the (I)MV chromatic numbers of the subdivision graphs of other graph classes. This might lead to finding some other relationships between the considered parameters and other classical topics of graph theory.

\begin{problem}
Study the (I)MV chromatic number of the subdivision graph of (an arbitrary) graph. 
\end{problem}

In relation with Proposition~\ref{Pro1}, it would be interesting to characterize the graphs $G$ 
that attain the equality $\imv(G)=\chi(G)\mvc(G)$. We wonder if there are any other connected graphs besides complete graphs that attain this bound. 

Trees on at least $3$ vertices and (most of the) cycles are families of graphs in which the two versions of mutual-visibility chromatic numbers have the same value. The following problem might be difficult, but obtaining partial solutions (such as finding other nice families with that property) seems achievable. 

\begin{problem}
Characterize the graphs $G$ with $\imv(G)=\mvc(G)$. 
\end{problem}

Proposition~\ref{prp:Hamming} establishes the IMV chromatic number of specific Hamming graphs with only three factors, in which case the number coincides with the chromatic number. It would be interesting to consider 
the family of Hamming graphs $(K_n)^{\Box,k}$, defined as $K_n\Box\cdots\Box K_n$, where there are $k$ factors of $K_n$, which is our last problem.

\begin{problem}
Determine $\imv((K_n)^{\Box,k})$ for all integers $n\ge 2$ and $k\ge 4$. In particular, for which $n$ and $k$ is $\imv((K_n)^{\Box,k})=\chi((K_n)^{\Box,k})$? 
\end{problem}

An interesting particular case in the problem above is when $n=2$ is considered. That is, studying the case of hypercube graphs. For instance, some studies for such classical family are already known from \cite{Axenovich}, for the case of the MV chromatic number.

\section*{Acknowledgments}
B.~Bre\v sar acknowledges the financial support from the Slovenian Research and Innovation Agency (research core funding No.\ P1-0297 and project grants N1-0285 and J1-4008). 
I. Peterin has been partially supported by Slovenian Research and Innovation Agency under the program P1-0297.
I. G. Yero has been partially supported by the Spanish Ministry of Science and Innovation through the grant PID2023-146643NB-I00. 


\end{document}